\let\origsection=\section \def\section{\@ifstar{\origsection*}{\mysection}} 
\def\mysection{\@startsection{section}{1}\z@{.7\linespacing\@plus\linespacing}{.5\linespacing}{\normalfont\scshape\centering\S}}
\renewcommand{\PrintDOI}[1]{\doi{#1}}
\theoremstyle{plain}
\newtheorem{thm}{Theorem}[section]
\newtheorem{prop}[thm]{Proposition}
\newtheorem{cor}[thm]{Corollary}
\newtheorem{lem}[thm]{Lemma}
\theoremstyle{definition}
\newtheorem{rem}[thm]{Remark}
\newtheorem{dfn}[thm]{Definition}
\numberwithin{equation}{section}
\let\theta=\vartheta
\let\rho=\varrho
\let\phi=\varphi
\let\til=\widetilde
\let\wh=\widehat
\def\tim{{\hskip-.17em\times}}
\def\cL{{\mathcal L}}
\def\cQ{{\mathcal Q}}
\def\cR{{\mathcal R}}
\def\cS{{\mathcal S}}
\def\ccL{{\mathscr{L}}}
\def\ccC{\mathscr{C}}
\def\ccS{\mathscr{S}}
\def\ccT{\mathscr{T}}
\def\ccP{\mathscr{P}}
\def\gZ{\mathfrak{Z}}
\def\gY{\mathfrak{Y}}
\def\gW{\mathfrak{W}}
\DeclareMathOperator{\HJ}{HJ}
\DeclareMathOperator{\HJC}{HJC}
\def\rmlabel{\upshape({\itshape \roman*\,})}
\def\greek#1{\expandafter\@greek\csname c@#1\endcsname}
\def\Greek#1{\expandafter\@Greek\csname c@#1\endcsname}
\def\@greek#1{\ifcase#1
	\or $\alpha$%
	\or $\beta$%
	\or $\gamma$%
	\or $\delta$%
	\or $\epsilon$%
	\or $\zeta$%
	\or $\eta$%
	\or $\theta$%
	\or $\iota$%
	\or $\kappa$%
	\or $\lambda$%
	\or $\mu$%
	\or $\nu$%
	\or $\xi$%
	\or $o$%
	\or $\pi$%
	\or $\rho$%
	\or $\sigma$%
	\or $\tau$%
	\or $\upsilon$%
	\or $\phi$%
	\or $\chi$%
	\or $\psi$%
	\or $\omega$%
\fi}
\def\@Greek#1{\ifcase#1
	\or $\mathrm{A}$%
	\or $\mathrm{B}$%
	\or $\Gamma$%
	\or $\Delta$%
	\or $\mathrm{E}$%
	\or $\mathrm{Z}$%
	\or $\mathrm{H}$%
	\or $\Theta$%
	\or $\mathrm{I}$%
	\or $\mathrm{K}$%
	\or $\Lambda$%
	\or $\mathrm{M}$%
	\or $\mathrm{N}$%
	\or $\Xi$%
	\or $\mathrm{O}$%
	\or $\Pi$%
	\or $\mathrm{P}$%
	\or $\Sigma$%
	\or $\mathrm{T}$%
	\or $\mathrm{Y}$%
	\or $\Phi$%
	\or $\mathrm{X}$%
	\or $\Psi$%
	\or $\Omega$%
\fi}
\AddEnumerateCounter{\greek}{\@greek}{24}
\AddEnumerateCounter{\Greek}{\@Greek}{12}
\def\glabel{\upshape({\itshape \greek*})}
\let\phi=\varphi
\def\cP{{\mathcal P}}
\def\lra{\longrightarrow}
\def\bl{\bigr(}
\def\br{\bigr)}
\let\vn=\varnothing
\newcommand{\seq}[1]{\accentset{\rightharpoonup}{#1}}
\newtheoremstyle{note}  {4pt}  {4pt}  {\sl}  {}  {\itshape}  {.}  {.5em}          {}
\theoremstyle{note}
\newtheorem{claim}{Claim}
\def\moverlay{\mathpalette\mov@rlay}
\def\mov@rlay#1#2{\leavevmode\vtop{   \baselineskip\z@skip \lineskiplimit-\maxdimen
   \ialign{\hfil$\m@th#1##$\hfil\cr#2\crcr}}}
\newcommand{\charfusion}[3][\mathord]{
    #1{\ifx#1\mathop\vphantom{#2}\fi
        \mathpalette\mov@rlay{#2\cr#3}
      }
    \ifx#1\mathop\expandafter\displaylimits\fi}
\newcommand{\dcup}{\charfusion[\mathbin]{\cup}{\cdot}}
\let\vn=\varnothing
\def\lra{\longrightarrow}
\def\bl{\bigr(}
\def\br{\bigr)}
\newcommand{\str}{\scalebox{1.1}[1.1]{{$\blacktriangleleft$}}}
\newcommand{\Str}{\hskip.4em {\scalebox{1.4}[1.4]{{$\blacktriangleleft$}}} \hskip.4em}
\newcommand{\mydcup}{\scalebox{1.1}[1.1]{$\,\dcup\,$}}
\let\sm=\setminus
\def\conc{\mathop{\hexstar}}
\begin{document}
\title[A Ramsey Class for Steiner Systems]{A Ramsey Class for Steiner Systems}

\author[Vindya Bhat]{Vindya Bhat}
\address{Department of Mathematics, 
New York University, New York, NY 10012-1110, USA}
\email{vbhat@cims.nyu.edu}

\author[Jaroslav Ne\v{s}et\v{r}il]{Jaroslav Ne\v{s}et\v{r}il}
\address{Department of applied Mathematics and 
Institute of Theoretical Computer Science,  
Charles University, 11800 Praha 1, Czech Republic}
\email{nesetril@kam.ms.mff.cuni.cz}
\thanks{The second author was supported by grant 
CE-ITI P202/12/G061 of GA\v{C}R}
\thanks{The second and fourth author 
were supported by grant ERC-CZ LL1201 of the Czech Ministry of Education.}

\author[Christian Reiher]{Christian Reiher}
\address{Fachbereich Mathematik, Universit\"at Hamburg,
Bundesstra\ss{}e~55, D-20146 Hamburg, Germany}
\email{Christian.Reiher@uni-hamburg.de}

\author[Vojt\v{e}ch R\"{o}dl]{Vojt\v{e}ch R\"{o}dl}
\address{Department of Mathematics and Computer Science, 
Emory University, Atlanta, GA 30322, USA}
\email{rodl@mathcs.emory.edu}
\thanks{The fourth author was supported by NSF grant DMS 1301698.}

\keywords{Ramsey classes, structural Ramsey theory, partite construction}
\subjclass[2010]{Primary 05D10, Secondary 05C55}

\begin{abstract}
We construct a Ramsey class whose objects are Steiner systems. 
In contrast to the situation with general $r$-uniform hypergraphs, it turns out
that simply putting linear orders on their sets of vertices is not enough for this 
purpose: one also has to strengthen the notion of subobjects used from 
``induced subsystems'' to something we call ``strongly induced subsystems''. 

Moreover we study the Ramsey properties of other classes of Steiner systems obtained 
from this class by either forgetting the order or by working with the usual notion 
of subsystems. This leads to a perhaps surprising induced Ramsey theorem in which 
{\it designs} get coloured.   
\end{abstract} 

\maketitle

\section{Introduction} \label{sec:intro}

\subsection{Structural Ramsey Theory} \label{subsec:SRT}

The Entscheidungsproblem from mathematical logic led
Ramsey~\cite{Ramsey30} to the following combinatorial principle that
became a cornerstone of an area now called Ramsey theory.

\begin{thm}\label{thm:Ramsey}
For any positive integers $m$, $r$, and $c$ there exists a positive integer $M$ with
\[
	M\lra (m)^r_c\,,
\]
i.e., such that no matter how the $r$-element subsets of an $M$-element set $X$ 
are coloured with~$c$ colours, there will always be an $m$-element subset $Y$ 
of $X$ such that all $r$-element subsets of~$Y$ are the same colour.
\end{thm}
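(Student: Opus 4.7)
\medskip

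The plan is to prove Theorem~\ref{thm:Ramsey} by induction on the uniformity parameter $r$. The base case $r=1$ is nothing but the pigeonhole principle: taking $M=c(m-1)+1$ ensures that among any $c$-colouring of the points of an $M$-element set some colour class has at least $m$ elements. So from now on I assume $r\ge 2$ and that the theorem is known for $r-1$ in place of $r$ (and arbitrary values of the other two parameters).

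For the inductive step I would use an Erd\H{o}s--Szekeres style ``pick-and-refine'' construction. Starting from a sufficiently large ground set $X$, I would iteratively produce a sequence of points $x_1,x_2,\dots,x_N\in X$ together with a decreasing sequence of reservoirs $X\supseteq X_1\supseteq X_2\supseteq\dots\supseteq X_N$ with $x_i\in X_{i-1}\setminus X_i$ as follows. Having selected $x_1,\dots,x_{i-1}$ and the reservoir $X_{i-1}$, pick any $x_i\in X_{i-1}$ and consider the auxiliary $c$-colouring $\chi_i$ of the $(r-1)$-subsets $T$ of $X_{i-1}\setminus\{x_i\}$ defined by $\chi_i(T)=\chi(T\cup\{x_i\})$, where $\chi$ denotes the given $c$-colouring of the $r$-subsets of $X$. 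By the inductive hypothesis, if $|X_{i-1}|-1$ is large enough, there exists a $\chi_i$-monochromatic subset $X_i\subseteq X_{i-1}\setminus\{x_i\}$ of whatever size is needed for the next round; let $c_i\in\{1,\dots,c\}$ be the corresponding colour. This produces data with the crucial property that for every $i$ and every $(r-1)$-subset $T\subseteq\{x_{i+1},\dots,x_N\}$, $\chi(\{x_i\}\cup T)=c_i$.

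Once the sequence $x_1,\dots,x_N$ and the associated colour sequence $c_1,\dots,c_N$ are in place, a single application of pigeonhole finishes the proof: some colour $c^*$ appears on a set of indices $I\subseteq\{1,\dots,N\}$ of size at least $N/c$, and the corresponding points $Y=\{x_i:i\in I\}$ form a $\chi$-monochromatic set, because any $r$-subset of $Y$ is of the form $\{x_i\}\cup T$ where $i=\min I'$ for some $I'\subseteq I$ and $T\subseteq\{x_j:j\in I,\,j>i\}$, and the above property then gives $\chi$-colour $c_i=c^*$. So choosing $N\ge c(m-1)+1$ yields a monochromatic $Y$ of size at least $m$.

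The only step that requires genuine care is the backward bookkeeping of the sizes: one must start with $|X|$ large enough that after $N$ successive invocations of the inductive hypothesis each reservoir $X_i$ is still big enough to support the next invocation. Concretely, one defines sizes $s_N<s_{N-1}<\dots<s_0$ by $s_N=1$ and $s_{i-1}=M_{r-1}(s_i+1,c)$, where $M_{r-1}(\cdot,\cdot)$ denotes the Ramsey number supplied by the inductive hypothesis, and then takes $M=s_0$. This is the only place where one has to be slightly careful, but it is a mechanical unwinding of the induction rather than a genuine obstacle; the conceptual content is entirely in the pick-and-refine construction above.
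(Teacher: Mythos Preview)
Your argument is the standard induction on $r$ via iterated refinement (the Erd\H{o}s--Rado ``ramification'' argument) and is essentially correct. The paper, however, does not prove Theorem~\ref{thm:Ramsey} at all: it is quoted as Ramsey's classical result with a reference to~\cite{Ramsey30} and then used as a black box. So there is no ``paper's own proof'' to compare against; you have supplied a proof where the paper simply cites one.

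One small bookkeeping remark: the recursion you wrote, $s_{i-1}=M_{r-1}(s_i+1,c)$, is not quite the natural one. What the construction actually needs is $|X_{i-1}|-1\ge M_{r-1}(s_i,c)$, i.e.\ $s_{i-1}=M_{r-1}(s_i,c)+1$. Your formula happens to dominate this in all cases that matter, so the argument still goes through, but if you want the clean statement you should use the latter recursion. Also note that the colour $c_N$ plays no role in the final pigeonhole step (no $r$-subset of $Y$ has minimum index $N$ once $|Y|\ge r$), so you may as well stop the refinement after producing $x_N$ without bothering to define $X_N$ or $c_N$; this lets you take $s_{N-1}=r-1$ (or even $1$) and avoids any worry about degenerate monochromatic sets at the last step.
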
  

This result admits a standard reformulation in the language of hypergraphs: for a fixed 
integer $r\ge 2$, an {\it $r$-uniform hypergraph}, or {\it $r$-graph} for short, 
is a pair $G=(V,E)$ consisting of a vertex set $V$ and an edge set $E \subseteq \binom{V}{r}$. 
If $G=(V, E)$ is an $r$-graph, we use the standard notation of writing $V(G)=V$, 
$v_G=\vert V \vert$, $E(G)=E$, and $e_G=\vert E\vert$. 
In case that~$G$ has all possible edges, that is if $E=\binom{V}{r}$, we say that $G$ is 
{\it a clique} and if additionally $\vert V\vert=m$ we say that $G$ is a $K_m^{(r)}$. 

Now~Theorem~\ref{thm:Ramsey} informs us  that if $M$ is large enough 
depending on $m$, $r$, and $c$, then
\[
	K_M^{(r)}\lra \bigl(K_m^{(r)}\bigr)^e_c
\]
holds, meaning that in every edge-colouring of $K_M^{(r)}$ using $c$ colours there
occurs a monochromatic copy of $K_m^{(r)}$. 
This raises the question whether there is a similar result when the target hypergraph 
$K_m^{(r)}$ is replaced by an arbitrary $r$-graph $G$. Of course, when we just ask 
for a monochromatic appearance of $G$ as a subhypergraph we may apply Ramsey's theorem 
with $m=v_G$. 
But the problem becomes significantly more challenging when we ask for a monochromatic 
induced copy of $G$.

Here, for two given $r$-graphs $G$ and $H$, we say that $G$ is an {\it induced subhypergraph} 
of $H$ and write $G\le H$ if $V(G) \subseteq V(H)$ and $E(G) = E(H) \cap \binom{V(G)}{r}$. 
By an {\it induced copy} of $G$ in~$H$ we mean an induced subhypergraph $\widetilde{G}$
of $H$ that is isomorphic to $G$. 

\begin{thm}[Induced Ramsey Theorem]\label{thm:rind}
Given any $r$-uniform hypergraph $G$ and any number $c\ge 1$ of colours, there exists an 
$r$-uniform hypergraph $H$ with
\begin{equation}\label{eq:e-arrow}
H\lra (G)^e_c\,
\end{equation}
in the sense that for every colouring of the edges of $H$ with $c$ colours there
exists a monochromatic induced copy of $G$ in $H$.
\end{thm}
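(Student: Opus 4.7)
My plan is to establish Theorem~\ref{thm:rind} by the Nešetřil--Rödl partite construction, which is the method the rest of the paper is built upon. Set $m=v_G$ and invoke Theorem~\ref{thm:Ramsey} to pick $N$ with $K_N^{(r)}\lra (K_m^{(r)})^e_c$. Identify $V(G)$ with an $m$-subset $M_0\subseteq [N]$, and let $\mathcal{E}\subseteq\binom{M_0}{r}$ be the edge set of $G$ viewed inside $[N]$. It suffices to produce an $N$-partite $r$-uniform hypergraph $H$ with vertex classes indexed by $[N]$ satisfying the following \emph{partite arrow property}: every $c$-colouring of $E(H)$ admits a transversal (one vertex per class) whose induced sub-hypergraph equals $\mathcal{E}$ and is monochromatic. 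The passage from the partite statement back to Theorem~\ref{thm:rind} is automatic because the choice of $N$ permits us to select a monochromatic $m$-subset of classes and then to restrict to the coordinates carrying edges of $G$.

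The partite version is proved in two steps. The \emph{partite lemma} handles the case in which $\mathcal{E}$ consists of a single $r$-tuple $J\subseteq [N]$: here an $N$-partite hypergraph with the desired property is obtained by a direct product construction, applying Theorem~\ref{thm:Ramsey} transversal-wise. The \emph{partite construction} then enumerates $\binom{[N]}{r}=\{J_1,\dots,J_T\}$ and builds a chain $\Pi_0\subseteq\Pi_1\subseteq\cdots\subseteq\Pi_T=H$, where $\Pi_i$ is obtained by amalgamating many copies of $\Pi_{i-1}$ along their classes indexed by $[N]\setminus J_i$ so that the $J_i$-transversals exhibit the partite lemma for $J_i$. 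Given a $c$-colouring of $E(H)$, a backwards induction peels off one coordinate at a time: at stage $i$ the partite lemma extracts a monochromatic copy of $\Pi_{i-1}$ on which the colour of all $J_i$-transversals has been fixed; after $T$ such contractions we arrive at a monochromatic partite-induced copy of $\mathcal{E}$ inside~$\Pi_0$.

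The main obstacle is \emph{keeping the construction induced}: each amalgamation could in principle create unwanted edges on transversals that are not indexed by the currently active~$J_i$, and such spurious edges would then ruin induced-ness of the final copy of $G$. The standard remedy is to add no edges at stage $i$ outside the $J_i$-transversals and to exploit the $N$-partite discipline so that transversals processed at distinct stages do not interfere with one another. Verifying this carefully, together with checking that the backwards-induction colouring argument actually propagates through all $T$ stages without losing the induced structure, is the delicate technical heart of the argument; every subsequent refinement in the paper (ordered Steiner systems, strongly induced subsystems) will have to confront a version of this same difficulty.
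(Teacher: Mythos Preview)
Your plan is the right one and matches the partite-construction proof from~\cite{NeRo5} that the paper cites (the paper itself obtains Theorem~\ref{thm:rind} as the special case $r=t$, $\ccT=\ccS$, $F=e$ of Theorem~\ref{thm:main-b}, whose proof in Section~\ref{sec:pos} follows the same template with Theorem~\ref{thm:hrc} in place of Theorem~\ref{thm:Ramsey}). However, two components of your sketch are not set up correctly and, as written, the argument does not run.

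First, picture zero must carry a separate vertex-disjoint copy of $G$ over \emph{every} $m$-subset $S\subseteq [N]$ (placed via the order-preserving bijection $[m]\to S$), not just over one fixed $M_0$; compare Definition~\ref{dfn:picz}. The backwards induction through the $\binom{N}{r}$ stages produces a copy of picture zero in which the colour of each edge depends only on its projection to $\binom{[N]}{r}$; it is only \emph{then} that the Ramsey property of $N$ is invoked, to select a monochromatic $m$-set $S$, and the copy of $G$ sitting over that $S$ is the one that works. With a single copy over $M_0$ your ``partite arrow property'' would already hand you a monochromatic induced $G$ and the choice of $N$ would be idle---but you cannot obtain that stronger property from the construction you describe.

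Second, your partite lemma is misidentified. It is not ``the case in which $\mathcal{E}$ consists of a single $r$-tuple''; rather, at stage $i$ it is applied to the restriction of $\Pi_{i-1}$ to the classes indexed by $J_i$, which is an $r$-partite $r$-graph with typically many edges, and what is needed is a larger $r$-partite $r$-graph in which every $c$-colouring admits a monochromatic partite-induced copy of that restriction. This is exactly Lemma~\ref{lem:ppl} with $F=e$, and its proof uses the Hales--Jewett theorem; a ``direct product construction applying Theorem~\ref{thm:Ramsey} transversal-wise'' does not deliver induced copies. (Also, in the amalgamation the copies of $\Pi_{i-1}$ share the classes indexed by $J_i$---they all live inside the partite-lemma object there---and are vertex-disjoint on $[N]\setminus J_i$, which is the reverse of what you wrote.)
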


For $r=2$ this was proved independently by
Deuber~\cite{Deuber75}, by Erd\H{o}s, Hajnal, and P\'{o}sa~\cite{EHP75}, 
and by R\"{o}dl in his master thesis~\cites{Rodl73, Rodl76}. For an alternative
proof we refer to~\cite{NeRo3a}. That article introduces the
so-called {\it partite method} on which much of the subsequent progress in 
this area is based, including the present work. There are even ``arithmetic'' 
applications of this method, see e.g., the work of Leader and 
Russell~\cites{LR06}.

For general hypergraphs the proof of Theorem~\ref{thm:rind} was obtained 
independently by Abramson and Harrington~\cite{AH78} and by Ne\v{s}et\v{r}il
and R\"{o}dl~\cite{NeRo1}. Actually both of these articles prove considerably 
stronger results. Shorter proofs of Theorem~\ref{thm:rind} utilising 
partite structures may be found in~\cite{NeRo5} and~\cite{BR16}.

The next level of generality is obtained by replacing the edge-symbol $e$ in 
formula~\eqref{eq:e-arrow} by other $r$-graphs. Such considerations may 
actually take place in a more abstract context that we are going to introduce
next. 

\begin{dfn} \label{dfn:rcl}
Let $\mathscr{C}$ be a class of objects endowed with an equivalence relation 
called  {\it isomorphism} and with a transitive {\it subobject} relation. Given two 
objects $F$ and $G$ from $\mathscr{C}$ we write $\binom{G}{F}$ for the 
class of all subobjects of $G$ that are isomorphic to $F$. 

For three objects $F, G, H\in\ccC$ and a positive integer $c$ the partition symbol
\[
	H\lra(G)^F_c
\]
means that no matter how $\binom{H}{F}$ gets coloured with $c$ colours, there
is some $\widetilde{G} \in \binom{H}{G}$ for which~$\binom{\widetilde{G}}{F}$ 
is monochromatic.  

The class $\ccC$ is said to have the {\it $F$-Ramsey property} if for every 
$G \in \mathscr{C}$ and every $c$ there exists an $H\in\ccC$ with $H\lra(G)^F_c$.

Finally, $\ccC$ is a {\it Ramsey class} if it has the $F$-Ramsey property 
for every $F\in\ccC$.
\end{dfn}

For example, Ramsey's theorem asserts that the class of finite sets with isomorphisms 
being bijections and subobjects being subsets is a Ramsey class. 
Moreover, the induced Ramsey theorem tells us that the class $\ccC^{(r)}$ 
of all $r$-uniform hypergraphs with the usual notion of isomorphism and whose
subobjects are induced subhypergraphs has the $e$-Ramsey property, where $e=K_r^{(r)}$.
It was proved in~\cite{NeRo1} that $\ccC^{(r)}$ has the $F$-Ramsey
property if~$F$ is either a clique or discrete, i.e., edgeless (see also~\cite{AH78}). 
That this condition
on $F$ is also necessary was proved for $r=2$ in~\cite{NeRo75} and it seems to belong
to the folklore of the subject that the probabilistic approach of~\cite{NeRo2} 
yields the same result for all $r\ge 2$ (see also Theorem~\ref{thm:main-b} below). 
 
So $\ccC^{(r)}$ is not a Ramsey class, but it turns out that a slight variant of
this class is Ramsey: let $\ccC^{(r)}_{<}$ be the class of all 
{\it ordered $r$-uniform hypergraphs}, that is, $r$-graphs endowed with a fixed linear 
ordering of their vertices. 
The isomorphisms of $\ccC^{(r)}_{<}$ are required to respect these orderings. 
Thus for $F_<, G_<\in \ccC^{(r)}_{<}$ with underlying (unordered) $r$-graphs
$F$ and $G$ the set~$\binom{G_<}{F_<}$ may in general correspond to a proper subset of 
$\binom{G}{F}$.

\begin{thm} \label{thm:hrc}
The class $\ccC^{(r)}_{<}$ of all ordered $r$-uniform hypergraphs is a Ramsey class. 
\end{thm}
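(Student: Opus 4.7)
The plan is to prove Theorem~\ref{thm:hrc} by the Ne\v{s}et\v{r}il-R\"odl partite construction, the machinery behind their proof of Theorem~\ref{thm:rind}. Fix $F_<,G_<\in\ccC^{(r)}_<$ with $v_{F_<}=a$ and $v_{G_<}=n$; identify their vertex sets with $[a]$ and $[n]$ carrying the natural orderings, and fix $c\ge1$. I would work in the auxiliary category of $a$-partite $r$-uniform hypergraphs with ordered parts $V_1<V_2<\cdots<V_a$ whose edges are all transversal (each edge meets each part in at most one vertex). A \emph{partite copy} of $F_<$ in such an object is a transversal $(v_1,\dots,v_a)\in V_1\times\cdots\times V_a$ whose induced subhypergraph equals $F_<$; after ordering the whole vertex set by $V_1<\cdots<V_a$ (with any order chosen inside each part), every partite copy of $F_<$ becomes an ordered copy in the forgetful sense.

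The central technical tool is the \emph{Partite Lemma}: for every $a$-partite $r$-uniform hypergraph $A$ with ordered parts and every $c\ge1$, there exists an $a$-partite $r$-uniform hypergraph $B$ with ordered parts such that every $c$-colouring of the transversal edges of $B$ admits a monochromatic partite copy of $A$. I would prove it by iterating Theorem~\ref{thm:Ramsey} once for each of the $\binom{a}{r}$ ``edge types'' $\{i_1<\cdots<i_r\}\subseteq[a]$, blowing up the relevant parts at each stage so that Ramsey's theorem forces monochromaticity on edges of that type.

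Equipped with the Partite Lemma, I would carry out the partite amalgamation. Start from a \emph{picture} $P_0$, an $a$-partite $r$-graph on parts of size $n$ whose designated transversal encodes a partite copy of $G_<$. Enumerate the partite copies of $F_<$ in $P_0$ as $\phi_1,\dots,\phi_t$ and build a chain $P_0,P_1,\dots,P_t$: at stage $i+1$, apply the Partite Lemma to the local picture around $\phi_{i+1}$ and amalgamate many copies of $P_i$ along matching sub-pictures, so that any $c$-colouring of partite copies of $F_<$ that activates the $\phi_{i+1}$-slot becomes constant on a partite copy of $P_i$. After $t$ steps $P_t\lra(P_0)^{F_<}_c$ in the partite sense, and hence $P_t\lra(G_<)^{F_<}_c$. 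Forgetting the partite structure of $P_t$ while retaining the induced linear order then yields the candidate $H_<$.

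The decisive step — and the main obstacle — is verifying that every \emph{ordered} copy of $F_<$ in $H_<$ is in fact a partite copy in $P_t$, so that the partite arrow relation upgrades to the ordered one. Because $H_<$ retains only transversal edges, two vertices in the same part share no edge; an ordered embedding of $F_<$ that collapsed two positions into a single part would therefore miss every edge of $F_<$ through those positions, contradicting the induced-copy condition unless such a collapse is excluded by the picture design. Arranging $P_0$ and the amalgamation to forbid all such collapses is the technical heart of the argument, and is exactly where the ordering pays off: in $\ccC^{(r)}$ unrestricted permutations of positions could spawn uncontrolled copies of $F$, which is why $\ccC^{(r)}$ itself is not Ramsey for general $F$.
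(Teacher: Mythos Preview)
The paper does not give its own proof of Theorem~\ref{thm:hrc}; it quotes the result from~\cite{AH78} and~\cite{NeRo1} (see also~\cite{Nero7}) and then uses it as a black box in Section~\ref{subsec:APC}. So there is nothing in the paper to compare your argument against directly, but your sketch has genuine gaps that would need to be repaired before it becomes a proof.

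The central one is a type mismatch in your Partite Lemma. What the amalgamation step needs is a lemma about colouring \emph{partite copies of $F_<$}, which are $a$-element transversals with $a=v_F$; what you actually state, and then propose to prove by ``iterating Theorem~\ref{thm:Ramsey} once for each of the $\binom{a}{r}$ edge types'', is a lemma about colouring \emph{transversal edges}, i.e., $r$-subsets. That is the edge partite lemma behind Theorem~\ref{thm:rind}, and for $a>r$ it does not deliver the statement you invoke in the amalgamation. The standard remedy, and exactly what Section~\ref{subsec:PPL} does in the Steiner setting, is to prove the $F$-copy partite lemma via the Hales--Jewett theorem applied to the set of distinguished copies of $F$, not via Ramsey's theorem on edges. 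A second problem is structural: you make $P_0$ an $a$-partite system whose ``designated transversal encodes a partite copy of $G_<$'', but a transversal in an $a$-partite system has only $a$ vertices, so this is impossible unless $v_F=v_G$. In the actual construction (compare Section~\ref{subsec:PC}) one works $M$-partite for some $M$ supplied by Ramsey's theorem at uniformity $a$, namely $M\to (v_G)^a_c$; picture zero then carries one transversal copy of $G_<$ over every $v_G$-subset of $[M]$, and there is one amalgamation step for each $a$-subset of $[M]$. With that setup your ``decisive obstacle'' also dissolves: one does not need every ordered copy of $F_<$ in $H_<$ to be partite (this is false in general), only those inside the eventual monochromatic transversal copy $\tilde G_<$, and there it is automatic because $\tilde G_<$ meets each of the $M$ parts at most once and every $a$-subset of $[M]$ has already been processed.
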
 

Again this was proved in~\cite{AH78} and~\cite{NeRo1} (see also~\cite{Nero7}). 
Other known examples of Ramsey classes include 
finite vector spaces over a fixed field $F$ (see~\cite{GLR72}), 
and finite partially ordered sets with fixed linear extensions~\cites{NeRo6, NeRo8, PTW85}.  
The study of Ramsey classes found its revival after several decades due to 
its connection with topological dynamics via ultrahomogeneous  structures
described by Kechris, Pestov, and Todorcevic in their seminal paper~(see~\cite{KPT}).
Further results in this direction were obtained in the recent work of 
van Th\'{e}~\cites{The10, The13} and by Hubi\v{c}ka and Ne\v{s}et\c{r}il~\cites{HN1, HN2}. 
A very readable account of the 
Kechris-Pestov-Todorcevic correspondence has recently been given by Solecki in his 
survey~\cite{Solecki}. For more information on structural Ramsey theory in general
we refer to Bodirsky's survey chapter~\cite{Bod15}.

\subsection{Steiner Systems} 
\label{subsec:SS}

Throughout the rest of this article, we consider classes of $r$-graphs called 
Steiner systems. For fixed integers $r \ge t\ge 2$, by a {\it Steiner $(r,t)$-system} 
we mean an $r$-uniform hypergraph $G=(V, E)$ with the property that every $t$-element 
subset of $V$ is contained in at most one edge of $G$. 

Such objects are also called ``partial Steiner systems'' in the design-theoretic literature, 
while the term ``Steiner system'' is reserved there to what we will call 
``complete Steiner systems'' (see Definition~\ref{dfn:cmpl} below). 

We denote the class of all Steiner $(r,t)$-systems with subobjects again being induced 
subhypergraphs by $\mathscr{S}(r,t)$. 
For example, the members of
$\mathscr{S}(r,2)$ are sometimes referred to as {\it linear hypergraphs} while
$\mathscr{S}(r,r)=\ccC^{(r)}$. 

The following generalisation of the induced Ramsey theorem was obtained by 
Ne\v{s}et\v{r}il and R\"{o}dl in~\cite{NeRo4}.

\begin{thm} \label{thm:edgesteiner} 
For any integers $r\ge t\ge 2$ the class $\mathscr{S}(r,t)$ has the edge-Ramsey property. 
\end{thm}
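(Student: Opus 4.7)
The plan is to adapt the partite construction method of Ne\v{s}et\v{r}il and R\"{o}dl (which underlies Theorem~\ref{thm:hrc}) so that every intermediate object remains a partial Steiner system. The argument is by induction on the number $m$ of edges of~$G$, the induction step being provided by a \emph{partite lemma} for Steiner systems that serves as the amalgamation engine.

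Let $G\in\mathscr{S}(r,t)$ have vertex set $[n]$ and edges $e_1,\dots,e_m\subseteq\binom{[n]}{r}$. Call an $r$-graph $P$ a \emph{picture} if $V(P)$ is partitioned into parts $V_1,\dots,V_n$, every edge of $P$ is transversal to the parts indexed by some $e_j$ (exactly one vertex in each $V_i$ with $i\in e_j$, and none outside), and $P$ itself satisfies the partial Steiner condition. First establish the partite lemma: given a picture $A$ and an edge pattern $e_j$, there exists a picture $B$ such that every $c$-colouring of the edges of $B$ following pattern $e_j$ yields a transversal copy of $A$ inside $B$ whose $e_j$-edges are monochromatic. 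This is extracted from the ordinary Ramsey theorem (Theorem~\ref{thm:Ramsey}) after suitably blowing up the parts $V_i$ with $i\in e_j$. Starting from $P_0=G$ regarded as a picture, iterate the partite lemma with patterns $e_m,e_{m-1},\dots,e_1$ to obtain $P_0\subseteq\cdots\subseteq P_m$; processing any given $c$-colouring of $E(P_m)$ pattern by pattern in reverse order yields a transversal copy of $G$ in $P_m$ all of whose edges have the same colour. The desired $H$ is then simply the underlying $r$-graph of $P_m$, and transversal induced copies of $G$ in the picture correspond exactly to induced subsystems of $H$ isomorphic to $G$.

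The main obstacle is preserving the condition that every $t$-subset lies in at most one edge when building $B$ inside the partite lemma. Each application glues many disjoint copies of $A$ along the vertices living in parts $V_i$ with $i\notin e_j$; two distinct new edges coming from different copies may then share a common $t$-subset of the identified parts. This concern is vacuous when $t=r$ (which recovers the classical case~\cite{NeRo1}) but is genuinely problematic for $t<r$, since partial Steiner systems are sparse and the amalgamated object could easily violate the uniqueness of the edge through a given $t$-subset. To circumvent this, the amalgamation must be made sufficiently ``generic'': one takes many parallel copies of $A$ with carefully chosen overlap structure (for instance via a random sparsification, or via an explicit design-theoretic gluing along an ambient Steiner system on the identified vertices) so that no $t$-subset is covered twice while enough pattern-$e_j$ edges survive to drive the pigeonhole argument. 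Reconciling this sparsification with the Ramsey-theoretic counting behind the partite lemma is the technical core of the proof.
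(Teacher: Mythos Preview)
There is a genuine gap in the iteration scheme itself, prior to any Steinerness considerations. You put $G$ senkrecht and process its own edges $e_1,\dots,e_m$. Tracing the back-iteration on a colouring of $P_m$: you first find a copy of $P_{m-1}$ whose $e_1$-edges all receive some colour $c_1$, inside it a copy of $P_{m-2}$ whose $e_2$-edges all receive some colour $c_2$, and so on down to a copy of $P_0=G$. That final copy of $G$ has its unique edge of pattern $e_j$ coloured $c_j$, but nothing in your construction forces $c_1=\cdots=c_m$. The partite construction works precisely because one puts senkrecht not $G$ but a hypergraph $Y$ with $Y\lra(G)^e_c$ (obtained, say, from Theorem~\ref{thm:Ramsey} or Theorem~\ref{thm:hrc}): one processes the edges of $Y$, obtains a colour $c_e$ for each $e\in E(Y)$, and then invokes the Ramsey property of $Y$ to locate a copy of $G$ inside $Y$ all of whose edges received the same colour; the corresponding good copy in the final picture is the desired monochromatic induced copy. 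Without this outer Ramsey layer your argument simply does not close.

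A secondary issue is that your account of the amalgamation is inverted: in the standard construction the copies of the previous picture are glued along the parts indexed by $e_j$ (the $r$-partite piece that was fed to the lemma), not along the complementary parts. Once a non-Steiner $Y$ is put senkrecht, preserving the Steiner condition through the amalgamations is genuine work, but the remedy in~\cite{NeRo4} and in the present paper is structural rather than probabilistic: the $r$-partite partite lemma is obtained from the Hales--Jewett theorem, whose product structure guarantees that the resulting $r$-partite Ramsey object is again a Steiner $(r,t)$-system (two edges sharing $t$ vertices project in every coordinate to edges of the input sharing $t$ vertices, hence coincide). One then argues that the intersection pattern of combinatorial lines is good enough to keep every amalgamated picture Steiner; for the stronger Theorem~\ref{thm:main-a} this even requires running the construction twice (Lemma~\ref{lem:ppl} followed by Lemma~\ref{lem:cpl}). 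Your suggestion of random sparsification or design-theoretic gluing would at best recreate this mechanism less transparently and, as written, is not a proof.
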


As remarked in~\cite{NeRo4} the proof described there does also show that the 
corresponding ordered class $\mathscr{S}_<(r,t)$ has the edge-Ramsey property.
The members of this class are, of course, Steiner $(r,t)$-systems with fixed 
linear orderings of their vertex sets and the isomorphisms of $\mathscr{S}_<(r,t)$ 
are required to preserve these orderings.

In the light of Theorem~\ref{thm:hrc} it is natural to wonder whether these classes
$\mathscr{S}_<(r,t)$ are Ramsey classes as well, but it turns out that for $t<r$ 
they are not (see also Corollary~\ref{cor:ind-st-ord} below). 

The main result of this article, however, asserts that those classes can be made Ramsey by changing the subobject relation as follows.

\begin{dfn} \label{dfn:strong}
Given two Steiner $(r,t)$-systems $G$ and $H$, we say that $G$ is a 
{\it strongly induced subsystem} of $H$ and write $G\Str H$ if 
\begin{enumerate}[label=\rmlabel]
\item\label{it:str-a} $G\le H$, i.e., $G$ is an induced subsystem of $H$ 
\item\label{it:str-b} and moreover $\vert e \cap V(G)\vert < t$ holds for all 
$e\in E(H)\sm E(G)$.
\end{enumerate}
The set of all strongly induced copies of $G$ in $H$ is denoted by $\binom{H}{G}_{\str}$.  
\end{dfn}

This concept leads to two further classes of Steiner systems.

\begin{dfn} \label{dfn:str-cl}
Let $\mathscr{S}^{\,\str}(r,t)$ be the class of all Steiner $(r,t)$-systems with isomorphisms 
as usual and whose subobjects are strongly induced subsystems. 
Similarly, $\mathscr{S}^{\,\str}_<(r,t)$ refers to the corresponding class of ordered 
Steiner $(r,t)$-systems.  
\end{dfn}
 
We may now announce the first main result of this article.

\begin{thm} \label{thm:main-a} 
The class $\mathscr{S}^{\,\str}_<(r,t)$ is Ramsey.  
\end{thm}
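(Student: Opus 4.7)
The plan is to establish Theorem~\ref{thm:main-a} via the \emph{partite amalgamation method} of Ne\v{s}et\v{r}il and R\"{o}dl. Fix ordered Steiner systems $F, G \in \mathscr{S}^{\,\str}_<(r,t)$ and a number of colours $c \ge 1$; the goal is to exhibit $H \in \mathscr{S}^{\,\str}_<(r,t)$ with $H \lra (G)^F_c$ interpreted via strongly induced subsystems. The base case is $F = K_r^{(r)}$: here, because any two distinct edges of a Steiner $(r,t)$-system meet in fewer than $t$ vertices, the set $\binom{H}{F}_{\str}$ coincides with the edge set of $H$, so the colouring input is exactly that of Theorem~\ref{thm:edgesteiner}; only the guaranteed copy of $G$ must be upgraded from ``induced'' to ``strongly induced'', which a careful rerun of the partite proof of the ordered edge-Ramsey theorem already delivers (the Ramsey object can be arranged so that no edge outside the target copy meets it in $t$ or more vertices).

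For a general $F$, set $f = v_G$ and work inside the class of $f$-partite ordered Steiner $(r,t)$-systems whose parts $V_1 < \cdots < V_f$ correspond to the positions of the vertices of $G$ in its linear order. I would enumerate the strongly induced copies of $F$ inside $G$ as $F^{(1)}, \ldots, F^{(N)}$, each occupying a fixed set $I_k \subseteq \{1, \ldots, f\}$ of positions, and then build an ascending chain of $f$-partite Steiner systems $P_0 \le P_1 \le \cdots \le P_N$ where $P_0$ is the ``picture'' carrying a single transversal copy of $G$. At stage $k$, the passage from $P_{k-1}$ to $P_k$ uses the ordered edge-Ramsey property applied to the coordinates $I_k$ to produce a Steiner system $Q_k$ that is Ramsey for the arrangements of $F^{(k)}$, and then amalgamates a copy of $P_{k-1}$ above each copy of $F^{(k)}$ sitting inside $Q_k$. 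Finally, $H$ is extracted from $P_N$ by flattening the partite structure, with a linear order chosen to agree with $V_1 < \cdots < V_f$ and with the internal orders on each $V_i$; a standard back-tracking argument across the stages then converts a monochromatic copy of $F$ under a fixed colouring of $\binom{H}{F}_{\str}$ into a strongly induced monochromatic copy of $G$.

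The principal obstacle I foresee lies in the amalgamation step. After each gluing $P_{k-1} \rightsquigarrow P_k$, two properties must be preserved: first, no two edges of $P_k$ may share $t$ or more vertices, so that $P_k$ remains a Steiner $(r,t)$-system; and second, every copy of $G$ eventually produced by the Ramsey argument must sit inside $H$ as a genuinely strongly induced subsystem. Both requirements pivot on the upgraded subobject relation: any hypothetical ``cross-edge'' that straddles two amalgamated copies of $P_{k-1}$ and meets a distinguished copy of $G$ in at least $t$ vertices would force the common copy of $F^{(k)}$ at the glue to host a configuration that the strongly induced condition on $G$ already forbids inside $P_{k-1}$. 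In the merely induced setting this self-limiting effect is absent, which simultaneously explains why $\mathscr{S}_<(r,t)$ fails to be Ramsey for $t < r$ (cf.\ Corollary~\ref{cor:ind-st-ord}) and why the passage to $\mathscr{S}^{\,\str}_<(r,t)$ is the right correction enabling the partite machinery to close.
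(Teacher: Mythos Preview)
Your overall plan of attack via a partite construction is the right one, but there is a genuine gap at exactly the point you flag as ``the principal obstacle'': the verification that each amalgamation step preserves both the Steiner property and strong inducedness. The heuristic you offer---that a cross-edge meeting a distinguished copy of $G$ in $\ge t$ vertices is ruled out because ``the strongly induced condition on $G$ already forbids'' the resulting configuration inside $P_{k-1}$---does not go through. The dangerous situation is a pair of edges $e',e''$ coming from two \emph{different} amalgamated copies of $P_{k-1}$ with $|e'\cap e''|\ge t$; the shared $t$-set $x$ then lies in the glue, but after several rounds $P_{k-1}$ is itself a large amalgam and there is no reason for $x$ to sit inside any single strongly induced copy of $F$ in either piece. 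Without that, the strong-inducedness hypothesis has nothing to act on, and your phrase ``the common copy of $F^{(k)}$ at the glue'' suggests too simple a picture: two canonical copies of $P_{k-1}$ overlap in an entire copy of the restriction $P_{k-1}\!\upharpoonright\! I_k$ inside $Q_k$, not in a single copy of $F$. (There is also a smaller mismatch: the ``ordered edge-Ramsey property'' colours edges, not copies of $F$, so it is not the right partite lemma for stage $k$ unless $F$ is itself an edge.)

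The paper resolves this by running the partite construction \emph{twice}. A first pass, driven by a Hales--Jewett-based preliminary partite lemma, is used not to prove the theorem directly but to manufacture a \emph{clean} partite lemma carrying the extra intersection guarantee: whenever two canonical copies in the Ramsey system share a $t$-set $x$, each of them contains a distinguished strongly induced copy of $F$ through $x$. This is precisely the missing hook. In the second (main) partite construction one then argues as follows: if an edge $e$ from one canonical copy $\widehat\Pi$ meets another canonical copy $\widetilde\Pi$ in a $t$-set $x$, the clean lemma supplies some $\widehat F\in\cS(\widehat\Pi)$ with $x\subseteq V(\widehat F)$; since $\widehat F$ is strongly induced in $\widehat\Pi$ one gets $e\in E(\widehat F)$, hence $e$ lies entirely in the glue, and strong inducedness of $\widetilde\Pi$ in the glue then forces $e\in E(\widetilde\Pi)$. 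A single pass with a Hales--Jewett or edge-Ramsey partite lemma does not deliver this $t$-set-covering property, and your sketch supplies no substitute for it.
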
 
 
As said above, the interest in Ramsey classes did recently increase due to 
their connection with other areas.
But it may be observed that even when one only cares about colouring edges,
Theorem~\ref{thm:main-a} gives more information than the earlier Theorem~\ref{thm:edgesteiner}.
Specifically, we obtain a monochromatic {\it strongly} induced copy of the original
Steiner system, and thus a copy that interacts only very little with any other such 
copy. Besides being of interest, such a strong type of containment of the monochromatic copy 
seems to be a useful property for further proofs utilising the partite method.  
 
In order to gain a better understanding as to why %this
Theorem~\ref{thm:main-a} is true, whilst in general the 
class~$\mathscr{S}(r,t)$ fails to be Ramsey, we will also determine 
for each of the four classes of Steiner systems introduced so far for which objects 
$F$ they have the $F$-Ramsey property. 
In this manner we can separately study the effects of ordering the vertices
and of changing the definition of subobjects. 

The adjectives {\it weak}, {\it strong}, {\it unordered}, and {\it ordered}
will be applied to these classes as indicated by the following table.
 
\medskip
 
\begin{center}
\begin{tabular}{|c||c|c|}
\hline
      & unordered & ordered \\ \hline \hline
weak & $\mathscr{S}(r,t)$ & $\mathscr{S}_<(r,t)$ \\ \hline
strong & $\mathscr{S}^{\,\str}(r,t)$ & $\mathscr{S}^{\,\str}_<(r,t)$  \\ \hline
\end{tabular}
\end{center}
 
\medskip

It will turn out that the property of Steiner systems relevant to unordered classes is that 
of homogeneity. 

\begin{dfn}\label{dfn:rigid}
A Steiner $(r, t)$-system $F$ is said to be 
{\it homogeneous}\footnote[1]{The reader familiar with model theory might find this 
usage of the word ``homogeneous'' confusing, because it means something entirely else 
in this context. In the theory of finite graphs, however, it is often used  
with the same meaning as above.}
if every 
permutation of~$V(F)$ induces an automorphism of~$F$. 
\end{dfn}
Thus a member of $\ccS(r,r)=\ccC^{(r)}$ is homogeneous if 
either it is of the form $K^{(r)}_m$ for some $m\ge r$ or if it has no edges. 
It should be observed, however, that except for the edge~$e$ the homogeneous $r$-graphs of the 
former type do not belong to $\ccS(r, t)$ as long as $t<r$.

Similarly for $t<r$ the weak classes demand completeness.

\begin{dfn}\label{dfn:cmpl}
A Steiner $(r, t)$-system $F$ is called
{\it complete} if for every $x\in\binom{V(F)}{t}$ there is an edge $e\in E(F)$ with 
$x\subseteq e$. 
\end{dfn}
Trivial cases for this to happen are that $F$ consists of less than $t$
isolated vertices or that~$F=e$. Non-trivial examples of complete Steiner systems, 
also known as {\it designs} in the literature, are very hard to come up with 
(at least for $t>7$, say). But recently Keevash~\cite{Keevash} established 
an important 160 year old conjecture of Steiner regarding their existence.       
 
With this terminology the second main result of this article reads as follows.

\begin{thm} \label{thm:main-b} 
Let $r\ge t\ge 2$, $\ccT\in\bigl\{\ccS, \ccS_<, \ccS^{\,\str}, \ccS^{\,\str}_<\bigr\}$, 
and $F\in\ccT(r,t)$.
Then $\ccT(r,t)$ has the $F$-Ramsey property if both of the following conditions hold:
\begin{enumerate}[label=\rmlabel]
\item\label{it:main-uno}  If $\ccT(r,t)$ is unordered, then $F$ is homogeneous, 
\item\label{it:main-weak} If $\ccT(r,t)$ is weak and $t < r$, then $F$ is a complete
Steiner system. 
\end{enumerate}
If on the other hand \ref{it:main-uno} or \ref{it:main-weak} fails, 
then $\ccT(r,t)$ fails to have the $F$-Ramsey property.
\end{thm}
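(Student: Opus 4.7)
The plan is to derive the sufficiency direction of Theorem~\ref{thm:main-b} from Theorem~\ref{thm:main-a} via two short reduction lemmas, and to handle necessity by explicit colouring obstructions analogous to the classical Ne\v{s}et\v{r}il--R\"{o}dl arguments.

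\smallskip

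\emph{Sufficiency.} The case $\ccT=\ccS^{\,\str}_<$ is exactly Theorem~\ref{thm:main-a}; the three remaining classes are obtained from it by forgetting the order, by passing from strong to weak containment, or by both. I would isolate two lemmas. The \emph{homogeneity lemma}: if $F$ is homogeneous, then any bijection $V(F)\to V(\tilde F)$ is automatically a Steiner isomorphism, so every unordered (strongly) induced copy of $F$ inside an ordered ambient system becomes, with the inherited order, an ordered copy of any fixed reference ordering $F_<$. The \emph{completeness lemma}: if $F$ is complete and $\tilde F\le\tilde G$ is a weakly induced subcopy of a strongly induced $\tilde G\Str H$, then in fact $\tilde F\Str\tilde G$, and hence $\tilde F\Str H$ by the easy transitivity of $\Str$. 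Indeed, an edge $e\in E(\tilde G)\sm E(\tilde F)$ meeting $V(\tilde F)$ in a $t$-set $x$ would, by completeness of $F$, force a second edge $e'\in E(\tilde F)$ containing $x$, contradicting the Steiner property of $\tilde G$.

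With these lemmas in hand I would fix an arbitrary ordering $F_<$ of $F$ and (in the unordered cases) of $G$, invoke Theorem~\ref{thm:main-a} to obtain $H_<$ with $H_<\lra(G_<)^{F_<}_c$ in $\ccS^{\,\str}_<$, and take $H$ to be the reduct of $H_<$ appropriate to $\ccT$. Any colouring of the $F$-subobjects of $H$ in the sense of $\ccT$ is lifted to ordered copies using the homogeneity lemma (in the unordered cases) and restricted from weak to strong copies using the completeness lemma (in the weak cases), yielding a colouring of $\binom{H_<}{F_<}_{\str}$; the monochromatic $\tilde G_<$ provided by Theorem~\ref{thm:main-a} then pulls back to a monochromatic $\tilde G$, since the same two lemmas ensure that every $F$-subobject of $\tilde G$ relevant to $\ccT$ is already captured by $\binom{\tilde G_<}{F_<}_{\str}$.

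\smallskip

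\emph{Necessity.} For each failure mode I would exhibit, for every candidate $H$, a bad $c$-colouring. If condition~\ref{it:main-uno} fails, I fix an arbitrary linear order on $V(H)$ and colour each $\tilde F\le H$ by the ordered-isomorphism type of $\tilde F$ endowed with the inherited order, using $c=|V(F)|!/|\mathrm{Aut}(F)|>1$ colours; it then remains to construct a single $G$ whose $F$-subcopies realise every such type under every linear ordering of $V(G)$, which, as foreshadowed in the paragraph following Theorem~\ref{thm:rind}, is provided by a Steiner-system adaptation of the probabilistic construction of~\cite{NeRo2}. If condition~\ref{it:main-weak} fails, pick a $t$-set $x_0\subseteq V(F)$ uncovered in $F$, and colour $\tilde F\le H$ by whether some image of $x_0$ under an isomorphism $F\to\tilde F$ lies in an edge of $H$; a suitable $G$ is built by taking a disjoint union of $F$ with a Steiner extension of $F$ that adds an $r$-edge through $x_0$ via $r-t$ new vertices, giving two $F$-subcopies in which the $x_0$-image is covered, respectively not covered, by an edge of $G$ itself, so that no weakly induced $\tilde G\le H$ can be monochromatic.

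I expect the main obstacle to be the probabilistic step for~\ref{it:main-uno}: producing a single Steiner system whose $F$-subcopies are universally bad under every linear ordering, while maintaining the pairwise $t$-disjointness of edges throughout the random construction, is the delicate heart of the argument.
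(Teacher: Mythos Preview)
Your sufficiency argument is correct and matches the paper's: both reduce to Theorem~\ref{thm:main-a} via the two observations that homogeneity of $F$ makes ordered and unordered $F$-copies coincide, and that completeness of $F$ makes weakly and strongly induced $F$-copies coincide. Your necessity argument for~\ref{it:main-uno} is also sound; the paper uses only two colours (picking two ordered types $F'_<$, $F''_<$ and taking $K$ to be a disjoint union of two copies of $F$), whereas you use all $|V(F)|!/|\mathrm{Aut}(F)|$ types, but both rest on the same probabilistic ordering lemma.

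Your necessity argument for~\ref{it:main-weak}, however, has a genuine gap. You colour a copy $\tilde F\le H$ by whether the image of the uncovered $t$-set $x_0$ lies in an edge \emph{of $H$}, and you take $G$ to be the disjoint union of $F$ with a one-edge extension $F^+$ of $F$ through~$x_0$. You then argue that the two canonical $F$-subcopies of $G$ receive different colours because $x_0$ is covered in $F^+$ and uncovered in the bare $F$-component ``by an edge of $G$ itself''. But the colouring refers to edges of $H$, not of $G$: once $\tilde G$ is only \emph{weakly} induced in $H$, nothing prevents $H$ from supplying an extra edge through the $x_0$-image of the bare $F$-component, so both copies may well be coloured ``covered''. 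Your construction of $G$ therefore does not force two colours.

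The paper repairs this by exploiting that $H$ is itself a Steiner system: through any $t$-set there is at most one edge of $H$, so the (ordered) isomorphism type of the one-edge extension of $\tilde F_<$ through $x_0$ is determined by $H_<$. One then takes \emph{two nonisomorphic} ordered extensions $F'_<$, $F''_<$ of $F_<$ (possible since $t<r$), sets $G_<=F'_<\sqcup F''_<$, and colours $\tilde F_<$ red iff it sits inside a copy of $F'_<$ in $H_<$; the Steiner property guarantees that a copy sitting inside $F''_<$ cannot simultaneously sit inside $F'_<$. The unordered weak case is not handled by this directly (there is only one unordered extension type), and the paper combines the same idea with the probabilistic ordering lemma. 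Your single uniform treatment of both weak cases misses this dichotomy.
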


Notice that this generalises all other structural Ramsey-theoretic theorems 
stated above. Theorem~\ref{thm:main-a} 
follows, as the above clauses~\ref{it:main-uno} and~\ref{it:main-weak} hold vacuously 
for $\ccT=\ccS^{\,\str}_<$.  The case $\ccT=\ccS$ and $F=e$ yields 
Theorem~\ref{thm:edgesteiner}, and the induced Ramsey theorem 
corresponds to the case $r=t$, $\ccT=\ccS$, and $F=e$. 

For the readers' convenience we would now like to state more explicitly what 
Theorem~\ref{thm:main-b} says for $\ccT\in\bigl\{\ccS, \ccS_<, \ccS^{\,\str}\bigr\}$.

\begin{cor} \label{cor:ind-st} 
The class $\mathscr{S}(r,t)$ has the $F$-Ramsey property if and only if 
one of the following two cases holds:
\begin{enumerate}
\item[$\bullet$] $r>t$ and either $F$ is an edge or $v_F < t$, 
\item[$\bullet$] $r=t$ and $F$ is homogeneous. 
\end{enumerate}
\end{cor}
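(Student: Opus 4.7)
The plan is to derive Corollary~\ref{cor:ind-st} directly from Theorem~\ref{thm:main-b} applied with $\ccT=\ccS$, and then to translate the two abstract necessary-and-sufficient conditions of that theorem into the explicit description given by the two bullets.

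Since $\ccS(r,t)$ is both unordered and weak, Theorem~\ref{thm:main-b} tells us that $\ccS(r,t)$ has the $F$-Ramsey property if and only if $F$ is homogeneous and, provided $t<r$, also complete. In the case $r=t$ clause~\ref{it:main-weak} is vacuous, so only homogeneity is required, which is exactly the second bullet of the corollary.

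It remains to verify that for $r>t$ a Steiner $(r,t)$-system $F$ is both homogeneous and complete if and only if $F=e$ or $v_F<t$. I would split this into two cases according to whether $F$ has an edge. If $F$ is edgeless, then homogeneity is automatic, while completeness forces $V(F)$ to contain no $t$-subset at all, i.e.\ $v_F<t$. If, on the other hand, $F$ has an edge $e_0$, then by homogeneity every $r$-subset of $V(F)$ is the image of $e_0$ under some permutation of $V(F)$, and hence an edge itself; so $F=K^{(r)}_{v_F}$. But $F\in\ccS(r,t)$ means that every $t$-subset lies in at most one edge, and since a given $t$-subset is contained in $\binom{v_F-t}{r-t}$ edges of $K^{(r)}_{v_F}$, the inequality $r>t$ forces $v_F=r$, i.e.\ $F=e$. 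In the opposite direction, it is immediate that a single edge and any hypergraph on fewer than $t$ vertices are both homogeneous and complete.

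The substantive work has already been carried out in Theorem~\ref{thm:main-b}; the deduction of Corollary~\ref{cor:ind-st} is a short case analysis where no serious obstacle arises.
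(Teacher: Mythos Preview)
Your proposal is correct and follows precisely the route the paper intends: the paper presents Corollary~\ref{cor:ind-st} as an explicit restatement of Theorem~\ref{thm:main-b} in the case $\ccT=\ccS$, and the observations needed for the translation (that for $t<r$ the only homogeneous Steiner $(r,t)$-systems with an edge are single edges, and that the only complete edgeless ones have $v_F<t$) are exactly those recorded after Definitions~\ref{dfn:rigid} and~\ref{dfn:cmpl}. Your case analysis spells out these remarks cleanly; nothing further is required.
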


\begin{cor}  \label{cor:ind-st-ord} 
The class $\ccS_<(r,t)$ has the $F$-Ramsey property if and only if $r=t$ or 
$F$ is a complete Steiner $(r, t)$-system (the cases $v_F<t$ and $F=e$ are included). 
\end{cor}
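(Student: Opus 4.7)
The plan is to derive Corollary~\ref{cor:ind-st-ord} as the specialisation of Theorem~\ref{thm:main-b} to the class $\ccT=\ccS_<$, and to note explicitly why the sufficient direction really follows from results already established. Since $\ccS_<$ is ordered, clause~\ref{it:main-uno} is vacuous; since $\ccS_<$ is weak, clause~\ref{it:main-weak} reduces to the single requirement that $F$ be a complete Steiner system whenever $t<r$. Hence Theorem~\ref{thm:main-b} tells us exactly that $\ccS_<(r,t)$ has the $F$-Ramsey property if and only if either $r=t$, or $r>t$ and $F$ is complete, which is the content of the corollary.

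For the sufficient direction I would argue directly as follows. When $r=t$, we have $\ccS_<(r,r)=\ccC^{(r)}_<$, which is a Ramsey class by Theorem~\ref{thm:hrc}; in particular it has the $F$-Ramsey property for every $F$. When $r>t$ and $F$ is complete, the key observation is that any induced copy of $F$ is automatically strongly induced: if $\widetilde F\le H$ were an induced copy and some edge $e\in E(H)\sm E(\widetilde F)$ satisfied $\vert e\cap V(\widetilde F)\vert\ge t$, then any $t$-subset $x\subseteq e\cap V(\widetilde F)$ would lie, by completeness of $\widetilde F$, in some edge $e'\in E(\widetilde F)\subseteq E(H)$, contradicting the Steiner property of~$H$. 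Consequently $\binom{H'}{F}=\binom{H'}{F}_{\str}$ for every $H'\in\ccS_<(r,t)$, and any $H$ witnessing Theorem~\ref{thm:main-a} for the pair $F,G\in\ccS^{\,\str}_<(r,t)$ automatically witnesses $H\lra(G)^F_c$ weakly in $\ccS_<(r,t)$: a colouring of $\binom{H}{F}$ is the same data as a colouring of $\binom{H}{F}_{\str}$, the guaranteed strongly induced copy $\widetilde G\in\binom{H}{G}_{\str}$ is a fortiori an induced copy of $G$, and on it the identity $\binom{\widetilde G}{F}_{\str}=\binom{\widetilde G}{F}$ transfers monochromaticity back to the weak setting.

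For the necessary direction I simply invoke the negative half of Theorem~\ref{thm:main-b}: when $r>t$ and $F$ is not complete, clause~\ref{it:main-weak} is violated and the $F$-Ramsey property fails in $\ccS_<(r,t)$. The main obstacle of course lies in this negative half, whose proof in the paper proper presumably rests on a probabilistic construction in the spirit of \cite{NeRo2} that, for any candidate target $H$, produces a colouring of $\binom{H}{F}$ avoiding a monochromatic induced copy of a suitably chosen $G$; the non-completeness of $F$ is exactly what supplies the freedom needed to tamper with edges through $\widetilde F$'s without destroying the Steiner property of $H$. With that negative input taken from Theorem~\ref{thm:main-b}, the corollary reduces to the formal bookkeeping above.
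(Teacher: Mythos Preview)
Your derivation is correct and matches the paper exactly: the corollary is presented there simply as the specialisation of Theorem~\ref{thm:main-b} to $\ccT=\ccS_<$, with no separate argument, and your sketch of the sufficient direction (induced $=$ strongly induced when $F$ is complete, then transfer from Theorem~\ref{thm:main-a}) is precisely the reasoning given in the positive subsection of Section~\ref{sec:neg}.

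One correction to your closing speculation, though: the negative half for $\ccS_<$ (Lemma~\ref{lem:34}) is not probabilistic but entirely explicit. One picks an uncovered $t$-set $x\subseteq V(F_<)$, extends $F_<$ by a single new edge through $x$ and $r-t$ fresh vertices in two nonisomorphic ordered ways to obtain $F'_<$ and $F''_<$, and takes $G_<$ to be their disjoint union. In any $H_<$, colour a copy of $F_<$ red iff it sits inside some copy of $F'_<$; the Steiner property forbids a red copy from also sitting inside a copy of $F''_<$, so no copy of $G_<$ is monochromatic. The probabilistic argument from~\cite{NeRo2} appears in Lemma~\ref{lem:order} and is used only for the \emph{unordered} negative results.
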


\begin{cor} \label{cor:ind-st-str} 
The class $\mathscr{S}^{\,\str}(r,t)$ has the $F$-Ramsey property if and only if 
\begin{enumerate}
\item[$\bullet$] $F$ is an edge 
\item[$\bullet$] or $F$ is discrete, 
\item[$\bullet$] or if $r=t$ and $F$ is a clique.  
\end{enumerate}
\end{cor}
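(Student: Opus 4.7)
The plan is to read off Corollary~\ref{cor:ind-st-str} directly from Theorem~\ref{thm:main-b} with $\ccT=\ccS^{\,\str}$, and then translate the resulting abstract condition ``$F$ is homogeneous'' into the explicit trichotomy appearing in the statement. Since $\ccS^{\,\str}(r,t)$ is unordered, clause~\ref{it:main-uno} forces $F$ to be homogeneous, while clause~\ref{it:main-weak} is vacuous because $\ccS^{\,\str}$ is strong rather than weak. Hence the $F$-Ramsey property for $\ccS^{\,\str}(r,t)$ holds if and only if $F$ is a homogeneous Steiner $(r,t)$-system, and what remains is to identify those.

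For the identification, first observe that if $F$ is homogeneous then $E(F)$ is invariant under the full symmetric group on $V(F)$, which acts transitively on $\binom{V(F)}{r}$; therefore $E(F)$ is either $\vn$ or $\binom{V(F)}{r}$. The first case is exactly the discrete bullet. In the second case $F=K^{(r)}_{v_F}$, and the partial Steiner condition requires each $t$-subset of $V(F)$ to lie in at most one edge, i.e.\ $\binom{v_F-t}{r-t}\le 1$. When $r=t$ this is automatic and every clique qualifies, giving the third bullet. When $r>t$ it forces $v_F\le r$, so either $v_F<r$ (which collapses into the discrete case) or $v_F=r$ (which gives $F=e$). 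Conversely, each of the three listed configurations has an edge set that is either empty or all of $\binom{V(F)}{r}$, hence is tautologically $\mathrm{Sym}(V(F))$-invariant and so homogeneous.

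There is no serious obstacle in this argument: the substantive work has been done by Theorem~\ref{thm:main-b}, and what is left is the trivial counting $\binom{v_F-t}{r-t}\le 1$ together with an $S_n$-orbit observation on $\binom{V(F)}{r}$. The only mild care needed is to organise the case split so that the three bullets of the corollary jointly, and with harmless overlap (the edge~$e$ is itself a clique when $r=t$, and the ``$v_F<r$'' subcase of the clique analysis reappears in the discrete bullet), cover exactly the family of homogeneous Steiner $(r,t)$-systems.
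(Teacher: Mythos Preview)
Your proof is correct and follows exactly the approach intended by the paper: the corollary is simply the specialisation of Theorem~\ref{thm:main-b} to $\ccT=\ccS^{\,\str}$, combined with the identification of homogeneous Steiner $(r,t)$-systems that the paper sketches immediately after Definition~\ref{dfn:rigid}. The paper does not spell out the counting argument $\binom{v_F-t}{r-t}\le 1$, so your write-up is in fact more detailed than the original, but the underlying reasoning is the same.
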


\begin{rem}
At first glance it might be tempting to think that Theorem~\ref{thm:main-a} could follow 
from the main result of~\cite{NeRo1}, because (1) Steinerness is 
describable in terms of forbidden substructures and (2) after adding
a new $t$-ary predicate to be interpreted by the $t$-sets of 
vertices which are contained in an edge, 
{\it strongly} induced subobjects become ordinary subobjects.

However, in the case at hand the forbidden substructures are not 
irreducible and thus Theorem~\ref{thm:main-a} is not a consequence of~\cite{NeRo1}. 
(On the other hand, the two results do not contradict each other because~\cite{NeRo1} 
speaks about {\it ideal} subcategories only, and this assumption 
does not apply here. Roughly speaking, this is because the product of a Steiner system 
with an arbitrary hypergraph does not need to be Steiner.) 
\end{rem}

\section{Why is the strong ordered class Ramsey?} \label{sec:pos}

\subsection{Overview} 
As our proof of Theorem~\ref{thm:main-a} is not so short and proceeds in several steps,
we would like to begin with an informal discussion of some of its central ideas and how 
they relate to two earlier arguments yielding special cases, namely to the first of the two
proofs of the induced Ramsey theorem presented in~\cite{NeRo5} and to the proof of 
Theorem~\ref{thm:edgesteiner} from~\cite{NeRo4}.  

A common theme occurring in all these proofs is that they rely on the 
{\it partite construction} introduced in~\cite{NeRo3a}. Generally speaking,
this construction is a versatile iterative amalgamation technique that allows
in many situations to strengthen ``weak'' Ramsey theoretic facts. 
For instance, Ramsey's theorem trivially yields a version of 
Theorem~\ref{thm:rind} where we just demand the monochromatic copy of $G$ to be
a subhypergraph that is not necessarily induced. Still, this can be used in a partite 
construction generating a bigger Ramsey object for $G$ of the kind that is actually 
desired by Theorem~\ref{thm:rind}. 

The two main ingredients of such a partite construction are 
\begin{enumerate}
\item[$\bullet$] a so-called {\it partite lemma}
\item[$\bullet$] and {\it partite amalgamations}.
\end{enumerate}

In both~\cite{NeRo5} and~\cite{NeRo4} the partite lemma essentially says 
that the result to be proved holds whenever $G$ is $r$-partite and it is proved
by an application of the  Hales-Jewett Theorem discussed in 
Subsection~\ref{subsec:HJ} below.

The partite amalgamations were straightforward in~\cite{NeRo5} but in the 
case of Steiner systems~\cite{NeRo4} some arguments were needed to show that 
no pairs of distinct edges intersecting in $t$ or more vertices were created. 
This verification was in turn based on a study of the intersection properties
of so-called ``canonical copies'' that correspond to the combinatorial lines of
the Hales-Jewett cube (see~\cite{NeRo4}*{Lemma~2.8}).

When one attempts to base a proof of Theorem~\ref{thm:main-a} on the same ideas,
there arises the following problem: While it is perfectly possible to use
the Hales-Jewett theorem again for proving a partite Lemma (see the 
``preliminary partite lemma'' in Subsection~\ref{subsec:PPL} below)
it turns out that its intersection properties are not strong enough for enabling us  
to perform partite amalgamations in sufficiently general situations. 
Our way of coping with that difficulty is that we run the partite construction 
twice. The first time we just aim at getting a better partite lemma, called 
the ``clean partite lemma'' in Subsection~\ref{subsec:PL} below. 
Owing to its more useful intersection properties this lemma 
can then be applied in a second partite construction that proves
Theorem~\ref{thm:main-a} (see Subsection~\ref{subsec:APC} below).

For experts on the partite method it might also be interesting to
observe that while our proof of the clean partite lemma uses a special
feature of the Hales-Jewett cube (see clause~\ref{it:ppl2} of Lemma~\ref{subsec:PPL} below), 
it only does so ``in the projection''. 

Throughout the rest of this section we fix integers $r\ge t\ge 2$ and a number of 
colours~$c\ge 1$.

\subsection{The Hales-Jewett Theorem} \label{subsec:HJ}

In this short subsection we will fix some terminology regarding the Hales-Jewett
theorem. For a finite nonempty set $\cQ$ and a positive integer~$n$, the Hales-Jewett
cube $\HJC(\cQ, n)$ is defined to be the Cartesian power $\cQ^n$. For any $h\in [n]$
we let $\pi_h\colon \HJC(\cQ, n)\lra \cQ$ denote the natural projection onto the $h$-th 
coordinate defined by $\pi_h(Q_1, \ldots, Q_n)=Q_h$ for every 
$(Q_1, \ldots, Q_n)\in\HJC(\cQ, n)$. 

Now consider all partitions $[n]=C\dcup M$ with $M\not =\vn$ and for each of them all 
functions $g\colon C\lra\cQ$. We will denote the set of all pairs $(C, g)$ arising in 
this way by $\ccL(\cQ, n)$. 
Its members encode in the following way so-called combinatorial lines that are subsets
of~$\HJC(\cQ, n)$.
For each $(C, g)\in\ccL(\cQ, n)$ we define the embedding 
\[
	\eta_{C, g}\colon \cQ\lra\HJC(\cQ, n)
\]
such that for every $h\in [n]$ and $Q\in \cQ$ we have
\begin{equation} \label{eq:HJ-eta}
	\pi_h\bl\eta_{C, g}(Q)\br
		=\begin{cases}
			g(h) & \text{ if } h\in C \cr
			Q    & \text{ if } h\in M.
		\end{cases}
\end{equation}

The range of this map, i.e., the set $\eta_{C, g}[\cQ]$,
is called the {\it combinatorial line} associated with the pair $(C, g)$. It is convenient
to think of the coordinates from $C$ as being {\it constant} and of those from $M$ as being
{\it moving}.

We may now state the Hales-Jewett theorem~\cite{HJ63} (see also~\cite{Sh329} for a 
beautiful alternative proof).
 
\begin{thm}[Hales-Jewett] 
For any positive integers $q$ and $c$ there exists a positive integer $\HJ(q, c)$
such that whenever $\cQ$ is a $q$-set and $n\ge \HJ(q, c)$ is an integer, every 
colouring of $\HJC(\cQ, n)$ with $c$ colours contains a monochromatic combinatorial line.
\end{thm}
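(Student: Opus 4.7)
The plan is to prove this classical theorem by induction on the alphabet size $q$. The base case $q=1$ is immediate: the single point of $\HJC(\{*\}, n)$ is itself a combinatorial line for any $n\ge 1$, with $C=\vn$ and $M=[n]$. For the inductive step, fix $q\ge 2$, assume that $\HJ(q-1, c')$ exists for every $c' \ge 1$, distinguish an element $q_0\in\cQ$, and set $\cQ' = \cQ\sm\{q_0\}$. The strategy is to build inside $\HJC(\cQ, n)$ an auxiliary ``combinatorial $c$-subspace'' whose restricted colouring is rigid enough that a monochromatic combinatorial line can be read off by pigeonhole.

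Partition $[n]$ into $c$ consecutive blocks $A_1, \ldots, A_c$. We construct wildcards $W_i \in \ccL(\cQ', |A_i|)$ inside the $A_i$ one at a time, in order $i = 1, 2, \ldots, c$, aiming for the following \emph{simultaneity} property: after $W_1, \ldots, W_i$ have been chosen, the $\chi$-colour of any point of the form $(W_1[v_1], \ldots, W_i[v_i], \vec{a}_{i+1}, \ldots, \vec{a}_c)$ is invariant under replacing any single $v_j \in \cQ'$ by another element of $\cQ'$, for every choice of $v_1, \ldots, v_i \in \cQ$ and $\vec{a}_{i+1}, \ldots, \vec{a}_c$. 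To arrange this at step $i$, define a \emph{product colouring} of $\cQ'^{A_i}$ whose palette records, for each fixing of the remaining parameters, the $\chi$-colour of the corresponding point in $\HJC(\cQ,n)$. Applying the inductive hypothesis to this colouring provides $W_i$, provided $|A_i|$ is at least the Hales--Jewett number for $\cQ'$ with the suitably inflated palette. This leads to tower-type but perfectly finite size requirements on the $|A_i|$.

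Once all wildcards are in place, the simultaneity property implies that on the combinatorial $c$-subspace $S \colon \cQ^c \to \HJC(\cQ, n)$ spanned by $W_1, \ldots, W_c$, the $\chi$-colour of $(v_1, \ldots, v_c)$ depends only on which coordinates equal $q_0$. Fix any $q' \in \cQ'$ and consider the $c+1$ staircase points
\[
P_i = (\underbrace{q', \ldots, q'}_{i}, \underbrace{q_0, \ldots, q_0}_{c-i}) \in \cQ^c, \qquad i = 0, 1, \ldots, c.
\]
By pigeonhole there exist $i < j$ with $\chi(S(P_i)) = \chi(S(P_j))$. The combinatorial line inside $S$ that fixes $v_1 = \cdots = v_i = q'$ and $v_{j+1} = \cdots = v_c = q_0$ while varying $v_{i+1} = \cdots = v_j = v$ over all $v \in \cQ$ is then monochromatic: the endpoints $v = q_0$ and $v = q'$ recover $P_i$ and $P_j$ respectively, while any intermediate $v \in \cQ' \sm \{q'\}$ gives the same colour as $P_j$ by simultaneity. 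Pulling this line back through $S$ yields a monochromatic combinatorial line in $\HJC(\cQ, n)$.

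The main technical obstacle is the forward-iterative bookkeeping: the wildcard $W_i$ must respect every possible completion by earlier wildcard values $v_1, \ldots, v_{i-1} \in \cQ$ and later free parameters $\vec{a}_{i+1}, \ldots, \vec{a}_c$, which forces the palette in the $i$-th invocation of the inductive hypothesis to grow like $c$ raised to an exponential in the remaining block sizes. This is the source of the Ackermann-type bounds in the classical Hales--Jewett argument. Shelah's proof (cited above as Sh329) recasts the induction so as to avoid these product colourings and yields primitive recursive bounds; for the purposes of this article either presentation suffices, since only the mere existence of $\HJ(q, c)$ is used in the sequel.
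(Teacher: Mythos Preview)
The paper does not prove this theorem; it is quoted from the original Hales--Jewett paper (with a pointer to Shelah's alternative argument) and used as a black box in the sequel. There is thus no ``paper's own proof'' to compare against.

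Your sketch is a correct outline of the classical inductive proof: the induction on the alphabet size~$q$, the construction of a $c$-dimensional combinatorial subspace by iterated applications of the $(q-1)$-case to product colourings, and the staircase pigeonhole to extract a monochromatic line are all standard and accurately described. One small expository point: the block sizes $|A_i|$ have to be fixed in \emph{reverse} order---since the Hales--Jewett number required for $|A_i|$ depends on $|A_{i+1}|,\ldots,|A_c|$ through the size of the product palette---before the wildcards $W_i$ themselves are constructed in forward order; you allude to this under ``forward-iterative bookkeeping'' but it could be said more plainly. For the purposes of the present paper either your argument or Shelah's suffices, as only the existence of $\HJ(q,c)$ is ever invoked.
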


\subsection{The preliminary partite lemma} \label{subsec:PPL}
Let $k$ be any positive integer.
A Steiner $(r, t)$-system~$X$ is said to be {\it $k$-partite} if its vertex set may 
be partitioned into $k$ classes $V^1, \ldots , V^k$ such that all edges $e \in E(X)$ 
are {\it crossing} in the sense that $|e \cap V^i| \le 1$ holds for all $i\in [k]$.  
We shall write $\ccP(k, r, t)$ for the class of all $k$-partite Steiner $(r,t)$-systems 
$X=\bl (V^i)_{i=1}^k,E\br$ having one such $k$-partition of its vertex set distinguished. 
Of course, if $k<r$ then such a $k$-partite Steiner system cannot have any edges. But the 
additional structure on such partite Steiner systems we need to deal with below may still
be non-trivial.

Associated with each $X\in \ccP(k, r, t)$ we have a {\it projection}  
\begin{equation} \label{eq:psi}
	\psi_X\colon V(X)\lra [k]
\end{equation}
sending the vertices from $V^i(X)$ to $i$ for every $i\in [k]$. 
In terms of this map, the crossing property of the edges means that $\psi_X$ is 
injective on every edge of~$X$. 

Let us fix some $F\in \ccS(r,t)$ with $V(F)=[k]$ for the rest of this subsection.
Given any $X\in \ccP(k, r, t)$ we shall write $\binom{X}{F}^\tim_{\str}$
for the set of all those strongly induced copies $\til{F}$ of~$F$ in~$X$
which have the property that $\psi_X$ is an isomorphism from $\til{F}$ to $F$.
In particular, these copies of $F$ are ``crossing'' in $X$ and contain precisely one 
vertex from each set of the form~$V^i(X)$. The little cross ``$\times$'' in the notation
$\binom{X}{F}^\tim_{\str}$ is intended to remind us of this fact. 
  
We are now ready to define the class of objects to which our partite lemmata will apply.

\begin{dfn} \label{dfn:Fhyp}
For $F\in\ccS(r, t)$ an {\it $F$-hypergraph} is a pair $(X, \cQ)$ with $X\in \ccP(k, r, t)$ 
and~$\cQ\subseteq \binom{X}{F}^\tim_{\str}$, such that 
\begin{equation} \label{eq:225}
	\psi_X[e]\in E(F) \text{ holds for every } e\in E(X)\,.
\end{equation}
The class of all $F$-hypergraphs is denoted by $\ccP(F, r, t)$.
\end{dfn} 

\begin{figure}[h]
\includegraphics[scale=1]{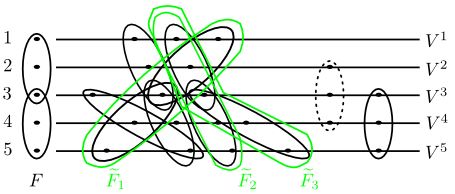}
\caption{Example of an $F$-hypergraph. The dotted ``edge'' must not exist.
Notice that not all copies of $F$ are in $\cQ$ (green).}
\end{figure}

About two $F$-hypergraphs $(X, \cQ)$ and $(Y, \cR)$ we say that $(X, \cQ)$ is 
{\it strongly induced} in~$(Y, \cR)$ and write $(X, \cQ)\Str (Y, \cR)$ if $X\Str Y$ and 
$\cQ=\binom{X}{F}^\tim_{\str}\cap\cR$ hold. By $\binom{(Y, \cR)}{(X, \cQ)}$ we denote the set 
of all strongly induced copies $(\til{X}, \til{\cQ})$ of $(X, \cQ)$ in $(Y, \cR)$. 
If $\gY\subseteq \binom{(Y, \cR)}{(X, \cQ)}$ is a system of such copies and $c$ is a 
positive integer, then the partition symbol 
\begin{equation}\label{eq:gY}
	\gY\lra (X, \cQ)^F_c
\end{equation}
means that for every colouring of $\cR$ with $c$ colours there exists a copy  
$(\til{X}, \til{\cQ})\in\gY$ such that~$\til{\cQ}$ is monochromatic. Evidently 
if $(Y, \cR)$ supports any such system
$\gY\subseteq \binom{(Y, \cR)}{(X, \cQ)}$, then 
\[
	\binom{(Y, \cR)}{(X, \cQ)}\lra (X, \cQ)^F_c
\]
holds as well. However, we may still gain something by considering the more general
partition property~\eqref{eq:gY} because a system of copies $\gY$ for which it is valid
might also have some additional properties which the full system 
$\binom{(Y, \cR)}{(X, \cQ)}$ possibly lacks. These additional properties can then 
be exploited in future arguments. For example, clause~\ref{it:ppl2} of the preliminary 
partite lemma stated below is not hard to verify for the system of copies~$\gY$ we get from 
the Hales-Jewett theorem, but it seems to be less clear whether $\binom{(Y, \cR)}{(X, \cQ)}$ 
would satisfy it as well. Roughly speaking, this clause asserts that the property of 
a $t$-set to be contained in a relevant copy of $F$ reflects from $(Y, \cR)$ to members 
of $\gY$. 
 
\begin{lem}[Preliminary partite lemma] \label{lem:ppl}
For every $F$-hypergraph $(X, \cQ)$ and every positive integer~$c$ 
there exists an $F$-hypergraph $(Y, \cR)$
together with a system of copies $\gY\subseteq \binom{(Y, \cR)}{(X, \cQ)}$ such that 
the following statements are true:
\begin{enumerate}[label=\rmlabel]
\item\label{it:ppl1} $\gY\lra (X, \cQ)^F_c$
\item\label{it:ppl2} Whenever $(\til{X}, \til{\cQ})\in\gY$, $\til{F}\in\cR$, and 
		$x$ is a $t$-subset of $V(\til{X})\cap V(\til{F})$, there is some
		$\til{F}'\in \til{\cQ}$ with $x\subseteq V(\til{F}')$.
\end{enumerate}
\end{lem}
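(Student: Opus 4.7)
The plan is to run a Hales--Jewett style construction in which the members of $\cQ$ play the role of the alphabet: each word $\vec Q \in \cQ^n$ will be realised by a crossing strongly induced copy $\til F_{\vec Q}$ of $F$ inside a cleverly built partite Steiner system $Y$, and each combinatorial line will give a strongly induced copy of $(X, \cQ)$ in $(Y, \cR)$. This way clause~\ref{it:ppl1} becomes an immediate corollary of the Hales--Jewett theorem, while clause~\ref{it:ppl2} will fall out of the ``constant vs moving coordinate'' structure of combinatorial lines.

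Assuming $\cQ \ne \vn$ (else $(Y, \cR) = (X, \vn)$ works trivially), set $n = \HJ(|\cQ|, c)$ and for $Q \in \cQ$ and $i \in [k]$ write $v_i(Q) \in V^i(X)$ for the unique vertex of $Q$ in class $V^i(X)$. Put $V^i(Y) = V^i(X)^n$ with the obvious projection $\psi_Y$, and declare an $r$-set $\{w_1, \ldots, w_r\} \subseteq V(Y)$ to be an edge of $Y$ exactly when (a) $\{\psi_Y(w_1), \ldots, \psi_Y(w_r)\} \in E(F)$ and (b) the slice $\{w_1^{(h)}, \ldots, w_r^{(h)}\}$ is an edge of $X$ for every $h \in [n]$. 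For $\vec Q \in \cQ^n$ let $\underline v_i(\vec Q) = (v_i(Q_1), \ldots, v_i(Q_n))$ and let $\til F_{\vec Q}$ be the copy of $F$ spanned by $\{\underline v_i(\vec Q) : i \in [k]\}$; set $\cR = \{\til F_{\vec Q} : \vec Q \in \cQ^n\}$. Steinerness of $F$ (applied to projections onto $[k]$) and of $X$ (applied slice-by-slice) shows that $Y$ is Steiner and that every $\til F_{\vec Q}$ is strongly induced in $Y$, which together with (a) certifies that $(Y, \cR)$ is an $F$-hypergraph.

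For each $(C, g) \in \ccL(\cQ, n)$, define $\rho_{C, g}\colon V(X) \to V(Y)$ by sending $x \in V^i(X)$ to the vector whose $h$-th coordinate is $v_i(g(h))$ for $h \in C$ and $x$ for $h \in M := [n] \sm C$. A similar slice-by-slice analysis shows that $\rho_{C, g}$ embeds $X$ as a strongly induced subsystem $\til X_{C, g} \Str Y$: edges of $X$ lift because slices at $h \in C$ become the relevant edge of $g(h) \subseteq X$ and slices at $h \in M$ are the original edge, while strong inducedness uses Steinerness of $X$ to reconstruct missing vertices from a slice at a moving coordinate. One also checks that $\til{\cQ}_{C, g} := \{\til F_{\eta_{C, g}(Q)} : Q \in \cQ\}$ is precisely the set of copies in $\cR$ whose vertex set lies in $V(\til X_{C, g})$, so $(\til X_{C, g}, \til{\cQ}_{C, g}) \Str (Y, \cR)$. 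Put $\gY = \{(\til X_{C, g}, \til{\cQ}_{C, g}) : (C, g) \in \ccL(\cQ, n)\}$.

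Clause~\ref{it:ppl1} is then immediate: a $c$-colouring $\chi$ of $\cR$ transports to $\cQ^n$ via $\vec Q \mapsto \chi(\til F_{\vec Q})$, and the Hales--Jewett theorem yields a monochromatic combinatorial line $(C, g)$, rendering $\til{\cQ}_{C, g}$ monochromatic. For clause~\ref{it:ppl2}, given $\til F_{\vec Q'} \in \cR$ and a $t$-set $x = \{\underline v_{i_1}(\vec Q'), \ldots, \underline v_{i_t}(\vec Q')\} \subseteq V(\til X_{C, g}) \cap V(\til F_{\vec Q'})$, the membership $\underline v_{i_l}(\vec Q') \in \rho_{C, g}(V^{i_l}(X))$ forces $v_{i_l}(Q'_h) = v_{i_l}(g(h))$ for $h \in C$ and $v_{i_l}(Q'_h)$ to be independent of $h \in M$; picking any $h_0 \in M$ (possible since $M \ne \vn$) and setting $Q := Q'_{h_0} \in \cQ$ then gives $\til F_{\eta_{C, g}(Q)} \in \til{\cQ}_{C, g}$ with $x \subseteq V(\til F_{\eta_{C, g}(Q)})$. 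The main subtlety is tuning $E(Y)$: it must be lean enough for $Y$ to be Steiner (and each $\til F_{\vec Q}$ strongly induced), yet generous enough for every $\rho_{C, g}$ to be an embedding; the slice-wise definition threads this needle, after which clause~\ref{it:ppl2} drops out of the moving-coordinate structure essentially for free.
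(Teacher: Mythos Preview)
Your proposal is correct and follows essentially the same route as the paper: build $Y$ as a slice-wise product via the Hales--Jewett cube $\cQ^n$, let $\cR$ be the copies of $F$ indexed by $\cQ^n$, let $\gY$ consist of the copies of $(X,\cQ)$ coming from combinatorial lines, and read off~\ref{it:ppl1} from Hales--Jewett and~\ref{it:ppl2} from the constant/moving coordinate structure. Your extra clause~(a) in the edge definition is harmless but redundant (it follows from~(b) and~\eqref{eq:225}), and your verification of~\ref{it:ppl2} via $Q=Q'_{h_0}$ is exactly the paper's argument, carried out cleanly.
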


\begin{figure}[ht]
\includegraphics[scale=1]{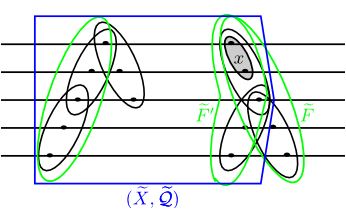}
\caption{Condition~\ref{it:ppl2} from the preliminary partite lemma}
\end{figure}

\begin{proof}
If $\cQ=\vn$ we may just take $(Y, \cR)=(X, \cQ)$ and $\gY=\{(X, \cQ)\}$, and the assertion
holds vacuously. So let us suppose $\cQ\not =\vn$ from now on and set $n=\HJ(|\cQ|, c)$.

Let us write $X=\bl (V^i)_{i=1}^k, E\br$. We begin by defining a $k$-partite
Steiner $(r,t)$-system $Y\in\ccP(k,r,t)$ in the following way: 
\begin{enumerate}
\item[$\bullet$]
For $i\in [k]$ we let 
$V^i(Y)=(V^i)^n$ be the $n$-th Cartesian power of $V^i$. 

A typical vertex from this class
will be written as $\seq{v}^{\, i}=(v^i_1, \ldots, v_n^i)$. For $h\in [n]$ and $i\in [k]$, we 
let $\pi_h \colon V^i(Y) \longrightarrow V^i(X)$ be the natural projection onto the 
$h$-th coordinate defined by $\pi_h(\seq{v}^{\, i})=v^i_h$ for any $\seq{v}^{\, i}$ as in the 
preceding sentence. 
\item[$\bullet$]
A crossing $r$-subset $f$ of $V(Y)$ is declared to be an edge of $Y$ 
if and only if $\pi_h[f]\in E(X)$ holds for all $h\in [n]$, i.e.,
\[
	E(Y)=\Bigl\{f\subseteq \textstyle{\binom{V(Y)}{r}}\,\,\Big|\,\, 
	\text{If } h\in [n], \text{ then } \pi_h[f] \in E(X)\Bigr\}\,.
\]
\end{enumerate}

Let us check that the $k$-partite hypergraph $Y$ thus defined is indeed a Steiner 
$(r, t)$-system.

\begin{claim}
If $f_1, f_2\in E(Y)$ are distinct, then $|f_1\cap f_2| < t$.
\end{claim}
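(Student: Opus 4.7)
The plan is to argue by contradiction: suppose distinct edges $f_1, f_2\in E(Y)$ satisfy $|f_1\cap f_2|\ge t$ and derive $f_1=f_2$. The key observation is that since both edges are crossing, they are completely determined by which parts of $Y$ they meet together with the vertex chosen in each such part; moreover, for every crossing set $f$ the projection $\pi_h$ is injective on $f$ because distinct parts $V^i(Y)$ map into distinct parts $V^i(X)$.

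First I would fix the combinatorial setup. Let $I_j\subseteq[k]$ be the set of indices $i$ for which $f_j$ meets $V^i(Y)$, so $|I_j|=r$, and write $\seq{u}^{\,i}$ (resp.\ $\seq{w}^{\,i}$) for the unique vertex of $f_1$ (resp.\ $f_2$) in $V^i(Y)$ when it exists. Let $A=\{i\in I_1\cap I_2\st \seq{u}^{\,i}=\seq{w}^{\,i}\}$, so $|f_1\cap f_2|=|A|$; our assumption reads $|A|\ge t$.

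Next I would project. For each $h\in[n]$, the defining property of $E(Y)$ gives $\pi_h[f_1],\pi_h[f_2]\in E(X)$, and both are crossing $r$-sets in $X$. For every $i\in A$ the shared vertex $\seq{u}^{\,i}=\seq{w}^{\,i}$ projects to a common element $u^i_h=w^i_h\in V^i(X)$ lying in both $\pi_h[f_1]$ and $\pi_h[f_2]$, so $|\pi_h[f_1]\cap\pi_h[f_2]|\ge|A|\ge t$. Since $X$ is a Steiner $(r,t)$-system, this forces $\pi_h[f_1]=\pi_h[f_2]$ for every $h\in[n]$.

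Finally I would conclude vertex by vertex. The equality $\pi_h[f_1]=\pi_h[f_2]$ first implies $I_1=I_2=:I$ (the parts touched by an edge of $X$ are read off from the crossing structure of its projection). Fix any $i\in I$ and any $h\in[n]$: among all members of $\pi_h[f_1]=\pi_h[f_2]$, exactly one lies in $V^i(X)$, namely $u^i_h$ coming from $f_1$ and $w^i_h$ coming from $f_2$, so $u^i_h=w^i_h$. As this holds for every $h$, we get $\seq{u}^{\,i}=\seq{w}^{\,i}$ for every $i\in I$, hence $f_1=f_2$, the desired contradiction. The only mild subtlety I anticipate is keeping the bookkeeping between the two levels of projection (from $V(Y)$ to $V(X)$ and from edges to parts) clean; once that is laid out, the argument is essentially forced by the Steiner property of $X$.
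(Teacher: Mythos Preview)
Your argument is correct and follows exactly the same route as the paper: assume $|f_1\cap f_2|\ge t$, project to each coordinate to obtain $|\pi_h[f_1]\cap\pi_h[f_2]|\ge t$, invoke the Steiner property of $X$ to get $\pi_h[f_1]=\pi_h[f_2]$ for all $h$, and conclude $f_1=f_2$. Your version merely spells out the final implication $\bigl(\forall h\colon \pi_h[f_1]=\pi_h[f_2]\bigr)\Rightarrow f_1=f_2$ with more explicit bookkeeping than the paper does.
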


\begin{proof}
Assume for the sake of contradiction that $|f_1\cap f_2| \ge t$.
By projection we get $|\pi_h[f_1] \cap \pi_h[f_2]|\ge t$ for every $h\in [n]$.
Since $X$ is a Steiner $(r, t)$-system, it follows that $\pi_h[f_1] = \pi_h[f_2]$
holds for every $h\in [n]$. This in turn yields $f_1=f_2$.
\end{proof}

The above definition of the edges of $Y$ shows that
\begin{equation} \label{eq:psiY}
	\text{ for every }f\in E(Y) \text{ we have } \psi_Y[f]\in E(F)\,,
\end{equation}
as required by Definition~\ref{dfn:Fhyp}. In what follows it will
be convenient to denote the unique vertex in $V^i$ of a copy 
$\til{F}\in \binom{X}{F}^\tim_{\str}$ by $v^i(\til{F})$, 
so that $V\bl \til{F}\br=\bigl\{v^i\bl \til{F}\br \,\big|\, i\in [k]\bigr\}$
holds for such copies $\til{F}$.

We are now ready to define a collection  
$\cR\subseteq\binom{Y}{F}^\tim_{\str}$. To this end we consider any sequence
$\seq{F}=(F_1, \ldots, F_n)\in\cQ^n$ of not necessarily distinct members of~$\cQ$.
For each $i\in [k]$ we may look at the vertex 
$\seq{v}^{\,i}=\bl v^i(F_1), \ldots, v^i(F_n)\br$ from $V^i(Y)$. Evidently for each 
$e\in E(F)$ we have $\{\seq{v}^{\, i}\,|\,i\in e\}\in E(Y)$. Together with~\eqref{eq:psiY}
this implies that the vertices $\seq{v}^{\,1}, \ldots, \seq{v}^{\,k}$ span a crossing strongly 
induced copy of $F$ in~$Y$, which we will denote by $\lambda(\seq{F})$ in the sequel.
Finally we set 
\[
	\cR=\bigl\{\lambda(\seq{F})\,|\,\seq{F}\in \cQ^n\bigr\}\,.
\]
Observe that 
\[
	\lambda\colon \cQ^n\lra \cR
\]
yields a natural bijective correspondence between the Hales-Jewett 
cube ${\HJC(\cQ, n)=\cQ^n}$ and $\cR$.

Next we address the subhypergraphs of $Y$ corresponding to combinatorial lines. Consider a 
partition $[n]=C\dcup M$ with $M\not =\vn$ as well as a function $g\colon C\lra \cQ$.
Observe that if~$h\in C$, then $g(h)$ is a copy of $F$ in $X$.
To describe the line encoded by the pair $(C, g)$ we will first introduce 
a partite map 
\[
	\varphi_{C, g}\colon V(X)\lra V(Y)
\]
such that for $v\in V^i$ and $h\in [n]$ we have 

\[
	\pi_h\bl\phi_{C, g}(v)\br=
		\begin{cases} 
			v^{i}\bl g(h)\br & \text{ if } h\in C \cr
			v                & \text{ if } h\in M.
		\end{cases}
\]
Let $Z_{C, g}$ be the induced $k$-partite subsystem
of $Y$ spanned by the range of $\phi_{C, g}$.

\begin{claim} \label{clm:line-ind}
If $(C, g)\in\ccL(\cQ, n)$, then $X\cong Z_{C, g} \Str Y$, where 
the isomorphism is given by $\varphi_{C, g}$. 
\end{claim}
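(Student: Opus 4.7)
My plan is to verify three things in turn: (i) $\varphi_{C,g}$ is injective; (ii) it is an isomorphism between $X$ and the induced subsystem $Z_{C,g}$ of $Y$; and (iii) $Z_{C,g}$ is in fact \emph{strongly} induced in $Y$. Parts (i) and (ii) are soft checks that only require the definition of $E(Y)$ together with the $F$-hypergraph condition~\eqref{eq:225}; the content of the claim sits in part~(iii).

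Since $M\neq\vn$, I pick some $h_0\in M$; the defining formula for $\varphi_{C,g}$ then yields $\pi_{h_0}\bl\varphi_{C,g}(v)\br=v$ for every $v\in V(X)$, so $\varphi_{C,g}$ is injective and preserves partite classes. For edge preservation, given $e\in E(X)$ it is enough to check that $\pi_h\bll\varphi_{C,g}[e]\brr\in E(X)$ for every $h\in [n]$: when $h\in M$ this projection is just $e$, while for $h\in C$ it equals $\bigl\{v^{\psi_X(v)}\bl g(h)\br:v\in e\bigr\}$, which by the crossness of $g(h)$ and the isomorphism $\psi_X|_{V(g(h))}\colon V(g(h))\to V(F)$ is precisely the edge of $g(h)$ that $\psi_X$ maps onto the edge $\psi_X[e]\in E(F)$ of $F$ provided by~\eqref{eq:225}, and hence lies in $E(g(h))\subseteq E(X)$. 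Edge reflection is just as easy: any $f\in E(Y)$ inside the range of $\varphi_{C,g}$ equals $\varphi_{C,g}[e]$ for some crossing $r$-set $e\subseteq V(X)$, and then $e=\pi_{h_0}[f]\in E(X)$.

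To establish strong inducedness, suppose $f\in E(Y)$ satisfies $|f\cap V(Z_{C,g})|\ge t$; since $Z_{C,g}$ is already induced in $Y$, it suffices to prove $f\subseteq V(Z_{C,g})$. Set $e_0=\pi_{h_0}\bll f\cap V(Z_{C,g})\brr\subseteq V(X)$; the retraction property of $\pi_{h_0}$ on $V(Z_{C,g})$ gives $|e_0|=|f\cap V(Z_{C,g})|\ge t$. Because $\pi_{h_0}[f]\in E(X)$ contains $e_0$ and $X$ is a Steiner $(r,t)$-system, there is a \emph{unique} edge $e\in E(X)$ with $e_0\subseteq e$, forcing $\pi_{h_0}[f]=e$; the same argument applied with any $h\in M$ in place of $h_0$ shows $\pi_h[f]=e$ throughout~$M$.

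At the constant coordinates $h\in C$, the crucial input is that $g(h)\Str X$. Since $f$ is crossing, $\pi_h$ restricted to $f\cap V(Z_{C,g})$ is injective and lands inside $V(g(h))$, so the edge $\pi_h[f]\in E(X)$ meets $V(g(h))$ in at least $t$ vertices; strong inducedness of $g(h)$ then forces $\pi_h[f]\in E(g(h))$, which together with the crossness of $\pi_h[f]$ pins down $\pi_h(u)=v^i\bl g(h)\br$ for every $u\in f$ of class~$i$. Combined with the $M$-coordinate information that $\pi_h(u)$ equals the unique vertex $w$ of $e$ in class~$i$ whenever $h\in M$, this shows $u=\varphi_{C,g}(w)$, and hence $f\subseteq V(Z_{C,g})$ as required. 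The hard part of the whole argument is precisely this last step, as it is only through the strongly-induced hypothesis on each $g(h)\in\cQ$ that the constant coordinates push every vertex of $f$ into the range of $\varphi_{C,g}$; the Steiner property of $X$ takes care of the moving coordinates on its own.
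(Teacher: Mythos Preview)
Your proof is correct, but it diverges from the paper's at the crucial step~(iii). After locating the edge $e\in E(X)$ with $e_0\subseteq e$ via the Steiner property of $X$ exactly as the paper does, you proceed coordinate by coordinate: for $h\in M$ the Steiner property of $X$ pins down $\pi_h[f]=e$, while for $h\in C$ you invoke the strong inducedness of $g(h)$ in $X$ to force $\pi_h[f]\in E(g(h))$ and thereby recover every coordinate of every vertex of~$f$. The paper instead short-circuits this second half: having already checked edge preservation (so $\varphi_{C,g}[e]\in E(Y)$) and having established in the preceding claim that $Y$ is itself a Steiner $(r,t)$-system, it simply observes that both $\varphi_{C,g}[e]$ and $f$ are edges of $Y$ containing the $t$-set $y=\varphi_{C,g}[x]$, hence must coincide. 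Consequently your closing remark that ``it is only through the strongly-induced hypothesis on each $g(h)\in\cQ$'' that the constant coordinates are controlled is slightly overstated: within \emph{your} argument that hypothesis is indeed what carries the $C$-coordinates, but the paper's route shows that the Steiner property of $Y$ (which followed from the Steiner property of $X$ alone, without using $\cQ\subseteq\binom{X}{F}^\tim_{\str}$) already suffices. Your approach has the merit of being self-contained and of making the role of each hypothesis visible coordinatewise; the paper's is shorter because it reuses the global Steiner property of the ambient hypergraph~$Y$.
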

 
\begin{proof}
The injectivity of $\varphi_{C, g}$ follows from $M\not =\vn$. 

Next we show that if $e$ is an edge of $X$, then $\phi_{C, g}[e]$ is an edge of $Z_{C, g}$.
Notice that~$\phi_{C, g}[e]$ is a crossing $r$-subset of $Y$. We need to verify that
$\bl \pi_h\circ \phi_{C, g}\br[e]\in E(X)$ holds for all~$h\in [n]$. For $h\in M$ we have
$\bl \pi_h\circ \phi_{C, g}\br[e]=e$. On the other hand, if $h\in C$, then~$g(h)$ is 
some copy $\til{F}\in \binom{X}{F}^\tim_{\str}$. Since we have $\psi_X[e]\in E(F)$
in view of~\eqref{eq:225}, it follows that 
$\bl \pi_h\circ \phi_{C, g}\br[e]\in E\bl \til{F}\br$.

There are two things that remain to be shown, namely that all edges of $Z_{C, g}$ 
have preimages in $X$ and that the inducedness of $Z_{C, g}$ in $Y$ is strong. 
Observe that both of them are implied by the following statement:
\[
	\text{If $f\in E(Y)$ satisfies $|f\cap V(Z_{C, g})|\ge t$, 
	then there is some $e\in E(X)$ with $\phi_{C, g}[e]=f$}\,.
\]

To prove this we put $y=f\cap V(Z_{C, g})$ and let $x\subseteq V(X)$ be the preimage of
$y$ with respect to $\phi_{C, g}$. For every $h\in M$
we have $x\subseteq \pi_h[f]\in E(X)$. Since $X$ is a Steiner $(r, t)$-system
and $|x|\ge t$, it follows that all edges of the form $\pi_h[f]$ with $h\in M$ 
must be the same. In other words there is an edge $e\in E(X)$ with 
$x\subseteq e=\pi_h[f]$ for every $h\in M$. Clearly~$e$ is as desired.
\end{proof}

We keep considering $C$, $M$, and $g$ as above. Let $\eta_{C, g}\colon \cQ\lra \HJC(\cQ, n)$
be the map given by~\eqref{eq:HJ-eta}. Now $\cL_{C, g}=(\lambda\circ\eta_{C, g})[\cQ]$  
is a subset of $\cR$ and one confirms easily that all vertices of the copies of $F$ belonging 
to this set lie in $V(Z_{C, g})$. The next claim asserts that combinatorial lines correspond
to strongly induced copies of $(X, \cQ)$ in $(Y, \cR)$.

\begin{claim} \label{clm:combi-line}
If $(C, g)\in\ccL(\cQ, n)$, then $(X, \cQ)\cong (Z_{C, g}, \cL_{C, g})\Str (Y, \cR)$.
\end{claim}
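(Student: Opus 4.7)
The claim has two parts: first, that $\varphi_{C, g}$ is an isomorphism of $F$-hypergraphs from $(X, \cQ)$ to $(Z_{C, g}, \cL_{C, g})$, and second, that $(Z_{C, g}, \cL_{C, g}) \Str (Y, \cR)$. In both cases, Claim~\ref{clm:line-ind} already delivers the underlying Steiner-system content ($X \cong Z_{C, g}$ via $\varphi_{C, g}$, together with $Z_{C, g} \Str Y$), so the remaining work lies entirely in matching up the distinguished copy systems $\cQ$, $\cL_{C, g}$, and $\cR$.

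For the first part, I would track where $\varphi_{C, g}$ sends a copy $\til F \in \cQ$. Setting $\seq{F} = \eta_{C, g}(\til F)$, so that $F_h = g(h)$ for $h \in C$ and $F_h = \til F$ for $h \in M$, the $i$-th vertex of $\lambda(\seq{F})$ is $\seq{v}^{\, i} = (v^i(F_1), \ldots, v^i(F_n))$; a coordinate-by-coordinate comparison with the definition of $\varphi_{C, g}(v^i(\til F))$ shows that the two vectors agree on both $C$ and $M$. Hence $\lambda \circ \eta_{C, g}$ coincides with the restriction of $\varphi_{C, g}$ to $\cQ$ (once a copy is identified with its vertex set), yielding a bijection $\cQ \to \cL_{C, g}$; injectivity comes for free from injectivity of $\varphi_{C, g}$.

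For the second part, the inclusion $\cL_{C, g} \subseteq \binom{Z_{C, g}}{F}^\tim_{\str} \cap \cR$ is then immediate from the previous paragraph and from $\cL_{C, g}\subseteq\cR$. The real work is the reverse inclusion: I must show that if $\lambda(\seq{F}) \in \cR$ has all its vertices in $V(Z_{C, g})$, then $\seq{F} \in \cQ^n$ already lies on the combinatorial line $\eta_{C, g}[\cQ]$. For each $i \in [k]$, the vertex $\seq{v}^{\, i} = (v^i(F_1), \ldots, v^i(F_n))$ sits in $V^i(Z_{C, g}) = \varphi_{C, g}[V^i]$, so there is some $w^i \in V^i$ with $\varphi_{C, g}(w^i) = \seq{v}^{\, i}$. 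Reading off coordinates yields $v^i(F_h) = v^i(g(h))$ for every $h \in C$ and $v^i(F_h) = w^i$ for every $h \in M$, both uniformly in~$i$. Since a crossing strongly induced copy in $\binom{X}{F}^\tim_{\str}$ is determined by the $k$-tuple of its vertices (one per class), the first identity forces $F_h = g(h)$ for all $h \in C$, while the second forces all $F_h$ with $h \in M$ to coincide with a single copy $F^* \in \cQ$ whose vertex set is $\{w^1, \ldots, w^k\}$. Thus $\seq{F} = \eta_{C, g}(F^*)$ and $\lambda(\seq{F}) \in \cL_{C, g}$, as required.

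The only genuinely non-formal step here is the last one: extracting a common $F^* \in \cQ$ from the $F_h$ with $h \in M$, using that a crossing copy is pinned down by its partite $k$-tuple of vertices, together with the nonemptiness of $M$ (which is built into the definition of $\ccL(\cQ, n)$). I expect this to be the main obstacle; the rest reduces to careful bookkeeping with the three interlocking definitions of $\varphi_{C, g}$, $\eta_{C, g}$, and $\lambda$.
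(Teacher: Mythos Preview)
Your proposal is correct and follows precisely the route the paper intends: the paper's own proof reduces the claim, via Claim~\ref{clm:line-ind}, to verifying the single equality $\cL_{C, g}=\binom{Z_{C, g}}{F}^\tim_{\str}\cap\cR$ and then explicitly ``leaves the details to the reader.'' Your argument supplies exactly those details---the coordinate-by-coordinate identification of $\varphi_{C,g}\!\upharpoonright\!\cQ$ with $\lambda\circ\eta_{C,g}$ for one inclusion, and the reconstruction of $F^*$ from the partite $k$-tuple $(w^1,\ldots,w^k)$ using $M\neq\varnothing$ for the other---so there is nothing to add or correct.
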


\begin{proof}
Owing to Claim~\ref{clm:line-ind} the only thing that needs to be checked is 
$\cL_{C, g}=\binom{Z_{C, g}}{F}^\tim_{\str}\cap\cR$. We leave the details to the reader.
\end{proof}

Now we contend that $(Y, \cR)$ and the system of copies
\[
	\gY=\bigl\{(Z_{C, g}, \cL_{C, g})\,|\, (C, g)\in\ccL(\cQ, n)\bigr\}
\]
have the properties~\ref{it:ppl1} and~\ref{it:ppl2} demanded by the preliminary 
partite lemma. 
The first of them is a direct consequence of the Hales-Jewett theorem and 
Claim~\ref{clm:combi-line}. 

Let us now prove~\ref{it:ppl2} for some $(\til{X}, \til{\cQ})\in\gY$, $\til{F}\in \cR$,
and $t$-set $x\subseteq V(\til{X})\cap V(\til{F})$. Choose a partition $[n]=C\dcup M$
with $M\not =\vn$ and a function $g\colon C\lra \cQ$ such that 
$(\til{X}, \til{\cQ})=(Z_{C, g}, \cL_{C, g})$ and let $\til{F}=\lambda(F_1, \ldots, F_n)$. 
The assumption $x\subseteq V(\til{F})$ yields $\pi_h[x]\subseteq V(F_h)$ for every $h\in [n]$.
In order to define $\til{F}'$ we select an arbitrary $h_0\in M$, set
\[
	F_h'=
		\begin{cases}
			F_h & \text{ if } h\in C \cr
			F_{h_0} & \text{ if } h\in M
		\end{cases}
\]
for every $h\in [n]$, and finally we let $\til{F}'=\lambda(F_1', \ldots, F_n')$. 
One sees immediately that $\til{F}'\in\cL_{C, g}$ and $x\subseteq V(\til{F}')$. 
\end{proof}

\subsection{The partite construction}\label{subsec:PC}

We will now provide an abstract description of the partite construction 
that is general enough for our intended applications. Since Steiner $(r, t)$-systems
are in general not closed under the kind of amalgamation we need to perform, we will
explain everything with general $r$-uniform hypergraphs instead. Actually it will be 
among the main difficulties encountered later on to formulate appropriate side conditions 
that allow us to maintain Steinerness throughout the partite construction.

The material that follows splits naturally into four parts. We begin by setting up 
some terminology regarding the ``pictures'' that the partite construction generates.
Next we introduce the so-called ``picture zero'' it is initialised with. In 
Subsubsection~\ref{sssec:amalgam} we discuss the ``amalgamations'' which bring us from one 
picture to the next. Finally we will be in a position to say precisely how the 
partite construction proceeds and what its main partition property (see 
Lemma~\ref{lem:pcrams} below) asserts. 

For the purposes of this subsection, 
we fix an $r$-uniform hypergraph $F$ with $k$ vertices, say. 

\subsubsection{{\bf Pictures}} The pictures we need to deal with for proving
Theorem~\ref{thm:main-a} will be three-layered structures consisting of a partite 
hypergraph, a system of distinguished copies of~$F$, and a system of distinguished
copies of the object for which we intend to find a Ramsey object. As a first step 
towards the definition of these pictures, we talk about hypergraphs with a 
distinguished system of copies of $F$. 

\begin{dfn}
By an {\it $F$-system} we mean a pair $(X, \cQ)$ consisting of 
an $r$-uniform hypergraph $X$ and a collection $\cQ\subseteq\binom{X}{F}$ 
of induced copies of $F$ in $X$. 
\end{dfn}

This concept should not be confused with the $F$-hypergraphs from 
Definition~\ref{dfn:Fhyp}. The differences are that $F$-systems
do not come with a $k$-partite structure and that the hypergraph $X$ 
is not required to be a Steiner $(r, t)$-system for any $t<r$. Accordingly
we cannot demand the copies of $F$ belonging to $\cQ$ to be strongly induced
as we did it in the case of $F$-hypergraphs.

In the sequel we will need to work with two different kinds of $F$-subsystems.

\begin{dfn} \label{dfn:25}
For two $F$-systems $(X, \cQ)$ and~$(Y, \cR)$ we say that the former is a 
{\it semi\-induced subsystem} of the latter if $X\le Y$ 
and~$\cQ\subseteq \cR$ hold. 
If moreover $\cQ= \binom{X}{F}\cap \cR$
we call $(X, \cQ)$ an {\it induced subsystem} of $(Y, \cR)$ and 
write $(X, \cQ)\le (Y, \cR)$.
\end{dfn}

For a collection $\gY$ of semi-induced copies of $(X, \cQ)$
in~$(Y, \cR)$ the partition symbol
\begin{equation}\label{eq:231}
	\gY\lra (X, \cQ)^F_c
\end{equation}
means that for every colouring of $\cR$ with $c$ colours there is some 
$\bl \til{X}, \til{\cQ}\br\in\gY$ for which~$\til{\cQ}$ is monochromatic.

For the remainder of Subsection~\ref{subsec:PC} we fix two $F$-systems $(X, \cQ)$
and $(Y, \cR)$ as well as a system of semi-induced copies $\gY$ such that~\eqref{eq:231} holds.
There arises no loss of generality by assuming 
\[
	V(Y)=[m]\,,
\]
where $m=v_Y$. 
Let us enumerate~$\cR$ and~$\gY$ as 
\begin{equation}\label{eq:242}
	\cR=\bigl\{F_1, \ldots, F_{|\cR|}\bigr\}
\end{equation}
and
\begin{equation}\label{eq:243}
	\gY=\bigl\{\bl X_1, \cQ_1\br, \ldots, \bl X_{|\gY|}, \cQ_{|\gY|}\br\bigr\}\,,
\end{equation}
respectively. 

We may now describe the first two layers of our pictures. 
In the definition that follows, the projection $\psi_{Z}\colon V(Z)\lra [m]$ 
is defined as in~\eqref{eq:psi}.

\begin{figure}[ht]
\includegraphics[scale=.8]{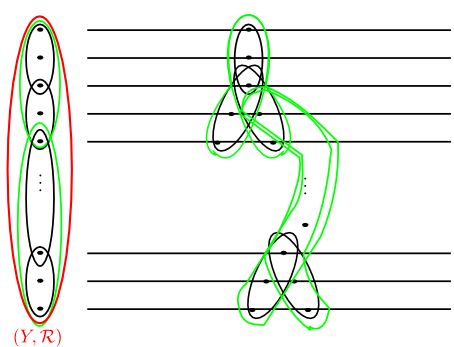}
\caption{A $(Y, \cR)$-hypergraph}
\end{figure}

\begin{dfn} \label{dfn:yr-hg}
A {\it $(Y, \cR)$-hypergraph} is a pair $(Z, \cS)$, 
\begin{enumerate}[label=\rmlabel]
	\item\label{it:yr1} where $Z$ is an $m$-partite $r$-uniform hypergraph,  
	\item\label{it:yr2} such that if $e\in E(Z)$, then $\psi_Z[e]\in E(Y)$
	\item\label{it:yr3} and $\cS$ is a system of crossing induced copies of $F$ in $Z$,
	\item\label{it:yr4} such that if $\til{F}\in\cS$, then $\psi_Z\bl \til{F}\br \in\cR$. 
\end{enumerate}
\end{dfn}

Together with such $(Y, \cR)$-hypergraphs~$(Z, \cS)$, the partite construction will also 
generate systems of so-called good copies of $(X, \cQ)$ within them.
Here a {\it good copy} of $(X, \cQ)$ in~$(Z, \cS)$ is a crossing induced subsystem
$(\widetilde{X}, \widetilde{\cQ})$ of~$(Z, \cS)$ (in the sense of Definition~\ref{dfn:25})
which is isomorphic to a member of $\gY$ via $\psi_Z$. 

\begin{dfn} \label{dfn:pic}
A {\it picture} is a triple $(Z, \cS, \gZ)$ such that   
\begin{enumerate}[label=\rmlabel]
	\item\label{it:pic1} $(Z, \cS)$ is a $(Y, \cR)$-hypergraph 
	\item\label{it:pic2} and $\gZ$ is a system of good copies of $(X, \cQ)$ in $(Z, \cS)$.
\end{enumerate}
If $\Pi=(Z, \cS, \gZ)$ is a picture, we will write $Z(\Pi)=Z$, $V(\Pi)=V(Z)$, $E(\Pi)=E(Z)$, 
$\cS(\Pi)=\cS$, and~${\gZ(\Pi)=\gZ}$. 
\end{dfn}

The next definition clarifies that for two pictures $\Pi$ and $\Pi'$ the notation 
$\Pi\le \Pi'$ has its expected meaning.

\begin{dfn}
For two pictures $\Pi=(Z, \cS, \gZ)$ and $\Pi'=(Z', \cS', \gZ')$ we write $\Pi\le \Pi'$
if the following hold:
\begin{enumerate}[label=\rmlabel]
\item For every $i\in [m]$ we have $V^i(Z)\subseteq V^i(Z')$.
\item $(Z, \cS)$ in an induced $F$-subsystem of $(Z', \cS')$.
\item A copy $\bl\til{X}, \til{\cQ}\br$ of $(X, \cQ)$ with $V\bl \til{X}\br \subseteq V(Z)$
		belongs to $\gZ$ if and only if it belongs to $\gZ'$.
\end{enumerate}
\end{dfn}

\subsubsection{{\bf Picture zero}}
The starting point of the partite construction is a so-called ``picture zero'' 
$\Pi^0=(Z^0, \cS^0, \gZ^0)$, which has associated with each member $(X_y, \cQ_y)$ 
of $\gY$ its own good copy $\bl X^0_y, \cQ^0_y\br$ of $(X, \cQ)$. These good copies 
are to be mutually vertex disjoint. Moreover, for each $y\in [|\gY|]$ the good copy 
$\bl X^0_y, \cQ^0_y\br\in \gZ^0$ is to be placed on the vertex classes
$V^1(Z^0), \ldots, V^m(Z^0)$ of $Z^0$ in such a way that $\big|V(X^0_y) \cap V^i(Z^0)\big|=1$
holds if and only if $i\in V(X_y)$. More exactly, we demand $\bl X^0_y, \cQ^0_y\br$ 
and $(X_y, \cQ_y)$ to be isomorphic via the projection $\psi_{Z^0}$. 

The formal definition that follows summarises this description.    

\begin{dfn} \label{dfn:picz}
{\it Picture zero} is a picture $\Pi^0=(Z^0, \cS^0, \gZ^0)$ with the property 
that we can write 
\[
	\gZ^0=\bigl\{\bl X^0_1, \cQ^0_1\br, \ldots, \bl X^0_{|\gY|}, \cQ^0_{|\gY|}\br\bigr\}
\]
in such a way that the following hold: 
\begin{enumerate}[label=\rmlabel]
	\item\label{it:p01} $V(Z^0)=V(X^0_1) \dcup \ldots \dcup V(X^0_{|\gY|})$ 
	\item\label{it:p02} $E(Z^0)=E(X^0_1) \dcup \ldots \dcup E(X^0_{|\gY|})$
	\item\label{it:p03} $\cS^0= \cQ^0_1 \dcup \ldots \dcup \cQ^0_{|\gY|}$
	\item\label{it:p04} If $y\in [|\gY|]$, then $\bl X^0_y, \cQ^0_y\br$ is 
			isomorphic to $(X_y, \cQ_y)$ via $\psi_{Z^0}$.
\end{enumerate}
\end{dfn}

\begin{figure}[ht]
\includegraphics[scale=1]{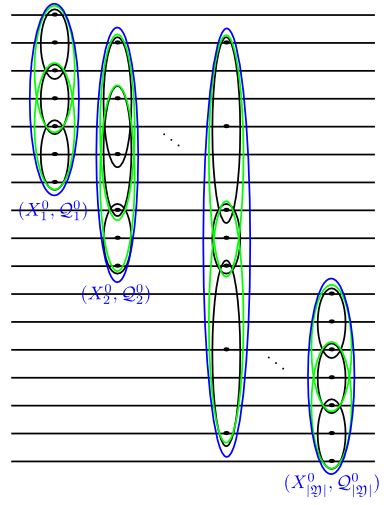}
\caption{Picture zero}
\end{figure}

\subsubsection{{\bf Amalgamation}} \label{sssec:amalgam}
The partite construction itself proceeds in $|\cR|$ successive amalgamation steps. 
To explain what happens in one such step, suppose that we have a picture $\Pi=(Z, \cS, \gZ)$
as well as an integer $\rho\in [|\cR|]$.  

Let $V(F_\rho)=\{j(1), \ldots, j(k)\}$ list the vertices of $F_\rho$ in increasing order. 
Define $(Z_\rho, \cS_\rho)$ to be the $k$-partite $F$-system with 
\begin{align*}
	V^i(Z_\rho)&=V^{j(i)}(Z) \text{ for all } i\in [k]\,,    \\
	E(Z_\rho)  &=\bigl\{e\in E(Z)\,\big|\,\psi_Z[e]\in E(F_\rho)\bigr\}\,,  \\
	\text{ and } \quad \quad \quad \quad \quad 
	\cS_\rho   &=\bigl\{\til{F}\in \cS\,\big|\,\psi_{Z}\bl \til{F}\br=F_\rho\bigr\}\,.
\end{align*}
Observe that $(Z_\rho, \cS_\rho)$ is an induced 
$F$-subsystem of $(Z, \cS)$ due to $F_\rho\le Y$ and the 
conditions~\ref{it:yr2}, \ref{it:yr4} of Definition~\ref{dfn:yr-hg}.

Now suppose $(W, \cP)$ to be a further $k$-partite $F$-system admitting a system 
$\gW$ of $k$-partite induced copies of $(Z_\rho, \cS_\rho)$ with
\[
	\gW\lra (Z_\rho, \cS_\rho)^F_c\,.
\]
Later on we will use either Lemma~\ref{lem:ppl} or Lemma~\ref{lem:cpl} to obtain 
such a system $\gW$.

The {\it amalgamation} we have in mind leads to a new picture 
\[
	\Pi\conc\nolimits_\rho \gW=
	(Z\conc\nolimits_\rho \gW, \cS\conc\nolimits_\rho \gW, \gZ\conc\nolimits_\rho \gW)
\]
the formal definition of which will cover the remainder of this subsubsection.
The underlying idea is that starting from $(W, \cP)$ we extend every copy
$\bl\til{Z}_\rho, \til{\cS}_\rho\br\in\gW$ of $(Z_\rho, \cS_\rho)$ to its own copy 
$\Pi_{\til{Z}_\rho, \til{\cS}_\rho}$
of the picture $\Pi$, keeping these copies of $\Pi$ as disjoint as possible, 
i.e., such that two distinct such copies do only share vertices of $W$ 
with each other. 

\begin{figure}[ht]
\includegraphics[scale=1]{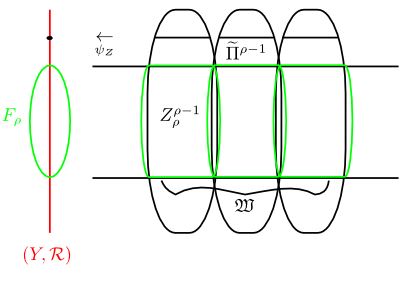}
\caption{Partite amalgamation}
\end{figure}

To begin with, the vertex classes of the desired 
hypergraph $Z\conc_\rho\gW$ are going to be
\[
	V^j(Z\conc\nolimits_\rho\gW)=
			\begin{cases}
			V^i(W) & 
			\text{ if } j=j(i) \text{ holds for some } i\in[k] \cr
			V^j(Z)\times \gW & 
			\text{ if } j\not\in\{j(1), \ldots, j(k)\}\,.
			\end{cases}
\]
Now for every copy $\bl\til{Z}_\rho, \til{\cS}_\rho\br\in \gW$ we fix 
a bijection 
$\overline{\phi}_{\til{Z}_\rho, \til{\cS}_\rho}\colon V(Z_\rho)\lra V(\til{Z}_\rho)$ 
establishing an isomorphism between $(Z_\rho, \cS_\rho)$ and $\bl\til{Z}_\rho, \til{\cS}_\rho\br$ 
respecting the $k$-partite structure and then we extend 
$\overline{\phi}_{\til{Z}_\rho, \til{\cS}_\rho}$ to an injective map
$\phi_{\til{Z}_\rho, \til{\cS}_\rho}\colon V(Z)\lra V(Z\conc\gW)$ given by
\[
	\phi_{\til{Z}_\rho, \til{\cS}_\rho}(v)=
				\begin{cases}
				\overline{\phi}_{\til{Z}_\rho, \til{\cS}_\rho}(v) &
				\text{ if } v\in V(Z_\rho) \cr
								\bl v, \bl\til{Z}_\rho, \til{\cS}_\rho\br\br &
				\text{ otherwise}.
								\end{cases}
\]
Further, we let the picture $\Pi_{\til{Z}_\rho, \til{\cS}_\rho}$ be the image of
$\Pi$ under $\phi_{\til{Z}_\rho, \til{\cS}_\rho}$. 

Finally $\Pi\conc\nolimits_\rho \gW=(Z\conc\nolimits_\rho \gW, 
\cS\conc\nolimits_\rho \gW, \gZ\conc\nolimits_\rho \gW)$ 
is defined to be the union of all these pictures
$\Pi_{\til{Z}_\rho, \til{\cS}_\rho}$ as $\bl \til{Z}_\rho, \til{\cS}_\rho \br$
varies over $\gW$, so explicitly we stipulate
\begin{align*}
	E(Z\conc\nolimits_\rho\gW) &=\bigcup\,\bigl\{ E\bl \Pi_{\til{Z}_\rho, \til{\cS}_\rho}\br
	\,\big|\, \bl \til{Z}_\rho, \til{\cS}_\rho\br \in \gW\bigr\}\,,  \\
	\cS\conc\nolimits_\rho\gW &=\bigcup\,\bigl\{ \cS\bl \Pi_{\til{Z}_\rho, \til{\cS}_\rho}\br
	\,\big|\, \bl \til{Z}_\rho, \til{\cS}_\rho\br \in \gW\bigr\}\,,  \\
    \text{ and } \qquad
	\gZ\conc\nolimits_\rho\gW &=\bigcup\,\bigl\{ \gZ\bl \Pi_{\til{Z}_\rho, \til{\cS}_\rho}\br
	\,\big|\, \bl \til{Z}_\rho, \til{\cS}_\rho
	\br \in \gW\bigr\}\,.
\end{align*}

It is easy to check that $\Pi\conc\nolimits_\rho \gW$ is again a picture and that
we have 
\begin{equation} \label{eq:part-ind}
	\Pi_{\til{Z}_\rho, \til{\cS}_\rho} \le \Pi\conc\nolimits_\rho \gW
	\quad \text{ for every }
	\bl \til{Z}_\rho, \til{\cS}_\rho\br \in \gW\,.
\end{equation} 
The pictures of the form $\Pi_{\til{Z}_\rho, \til{\cS}_\rho}$ will be referred to as the
{\it canonical copies} of $\Pi$ in $\Pi\conc\nolimits_\rho \gW$.

\subsubsection{{\bf The final picture}}
Having thus defined amalgamations, we may proceed by describing the partite construction
itself. 

\begin{dfn}
Suppose that we construct a sequence 
\[
	\Pi^0=(Z^0, \cS^0, \gZ^0), \ldots, \Pi^{|\cR|}=\bl Z^{|\cR|}, \cS^{|\cR|}, \gZ^{|\cR|}\br 
\]
of pictures starting with picture zero and 
such that for each $\rho\in [|\cR|]$ we have 
\[
	\Pi^\rho=\Pi^{\rho-1}\conc\nolimits_\rho \gW^\rho, 
	\quad \text{where} \quad  \gW^\rho\lra (Z^{\rho-1}_\rho, \cS^{\rho-1}_\rho)^F_c\,.
\]
Then we say that the picture $\Pi^{|\cR|}$ has arisen by means of a 
{\it partite construction} and about~$(Y, \cR)$ itself we say that it 
was put {\it senkrecht}.
\end{dfn}
 
We conclude this subsection by stating an important property of this construction. 

\begin{lem} \label{lem:pcrams}
If the picture $\Pi=(Z, \cS, \gZ)$ arises by a partite construction, then
\[
	\gZ\lra (X, \cQ)^F_c\,.
\]
\end{lem}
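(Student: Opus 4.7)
The proof plan is a descending induction on $\rho$. Fix a $c$-colouring $\chi$ of $\cS^{|\cR|}$. Starting from $\Pi^{|\cR|}_*=\Pi^{|\cR|}$, I will recursively select subpictures $\Pi^\rho_*\le \Pi^{|\cR|}$ that are canonical copies of~$\Pi^\rho$ together with colours $\gamma_{\rho+1},\dots,\gamma_{|\cR|}\in[c]$ such that every $\til F\in\cS(\Pi^\rho_*)$ projecting under $\psi_{Z^{|\cR|}}$ to $F_j$ with $j>\rho$ satisfies $\chi(\til F)=\gamma_j$.

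For the inductive step $\rho\to\rho-1$, the canonical isomorphism $\Pi^\rho_*\cong\Pi^\rho=\Pi^{\rho-1}\conc\nolimits_\rho \gW^\rho$ identifies the canonical copies of $\Pi^{\rho-1}$ inside $\Pi^\rho_*$ with $\gW^\rho$, and identifies the $F$-copies in $\cS(\Pi^\rho_*)$ projecting to $F_\rho$ with a subset of the $F$-copies of the ambient partite $F$-system that carries $\gW^\rho$. Extend the restriction of $\chi$ to those $F$-copies arbitrarily to a $c$-colouring of the whole ambient $F$-system and apply $\gW^\rho\lra(Z^{\rho-1}_\rho,\cS^{\rho-1}_\rho)^F_c$ to produce some $(\til Z^{\rho-1}_\rho,\til\cS^{\rho-1}_\rho)\in\gW^\rho$ whose $F$-copies $\til\cS^{\rho-1}_\rho$ share a common colour $\gamma_\rho$; because every element of $\til\cS^{\rho-1}_\rho$ projects to $F_\rho$, this $\gamma_\rho$ is a genuine $\chi$-value. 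Take $\Pi^{\rho-1}_*$ to be the corresponding canonical copy of $\Pi^{\rho-1}$ inside $\Pi^\rho_*$; iterating~\eqref{eq:part-ind} gives $\Pi^{\rho-1}_*\le\Pi^{|\cR|}$, and the inductive invariant is preserved, since the $F$-copies newly required to be coloured $\gamma_\rho$ are exactly those in $\til\cS^{\rho-1}_\rho$.

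After $|\cR|$ iterations we obtain a canonical copy $\Pi^0_*\le\Pi^{|\cR|}$ of picture zero in which every $F$-copy projecting to $F_\rho$ is $\chi$-coloured $\gamma_\rho$. Since the projection $\psi$ intertwines with every embedding $\phi_{\til Z_\rho,\til\cS_\rho}$, the decomposition of $\Pi^0$ into disjoint good copies $(X^0_y,\cQ^0_y)\cong(X_y,\cQ_y)$ transfers faithfully to $\Pi^0_*$, and the $\chi$-colouring of each $\cQ^0_y$ is the pull-back of the $c$-colouring $\chi'\colon\cR\to[c]$ defined by $\chi'(F_\rho)=\gamma_\rho$. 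Applying $\gY\lra(X,\cQ)^F_c$ to $\chi'$ produces some $(X_{y_0},\cQ_{y_0})\in\gY$ with $\cQ_{y_0}$ being $\chi'$-monochromatic, and the corresponding good copy in $\Pi^0_*$ is therefore $\chi$-monochromatic. Clause~(iii) in the definition of $\le$ for pictures, propagated along the chain $\Pi^0_*\le\Pi^1_*\le\cdots\le\Pi^{|\cR|}$, places this good copy inside $\gZ^{|\cR|}=\gZ$, as required.

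The most delicate point of the plan is the bookkeeping around canonical copies: one must check that a canonical copy of $\Pi^{\rho-1}$ inside an already-chosen canonical copy of $\Pi^\rho$ is indeed a subpicture of the global $\Pi^{|\cR|}$ in the sense of the definition, and that the projection $\psi_{Z^{|\cR|}}$ factors consistently through every embedding $\phi_{\til Z_\rho,\til\cS_\rho}$, so that the attribute ``$\til F$ projects to $F_\rho$'' is unambiguous at every stage. These are routine unfoldings of the amalgamation definition from Subsubsection~\ref{sssec:amalgam}, but they are exactly what allows the inductive invariant to be meaningfully formulated and propagated.
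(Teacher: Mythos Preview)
Your argument is correct and follows essentially the same route as the paper's own proof: a backwards sweep through the amalgamation steps using each $\gW^\rho\lra(Z^{\rho-1}_\rho,\cS^{\rho-1}_\rho)^F_c$ to pin down a colour $\gamma_\rho$ on the copies projecting to $F_\rho$, landing at a copy of picture zero, and then invoking $\gY\lra(X,\cQ)^F_c$ on the resulting colouring of~$\cR$. Your treatment is in fact slightly more explicit than the paper's about the bookkeeping (extending $\chi$ to all of $\cP$, propagating $\le$ via~\eqref{eq:part-ind}), but the substance is the same.
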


\begin{proof}
Keeping the above notation in force we have $Z=Z^{|\cR|}$, $\cS=\cS^{|\cR|}$, and 
${\gZ=\gZ^{|\cR|}}$. Consider any colouring $\gamma\colon \cS\lra [c]$. 
When we go backwards through the partite construction, the first partition property 
we may invoke is $\gW^{|\cR|}\lra \bl Z^{|\cR|-1}_{|\cR|}, \cS^{|\cR|-1}_{|\cR|}\br^F_c$. 
It leads to a canonical copy $\til{\Pi}^{|\cR|-1}$ of $\Pi^{|\cR|-1}$ 
and to a colour $\phi(|\cR|)\in [c]$ such that $\gamma(\til{F})=\phi(|\cR|)$ holds 
for all $\til{F}\in \til{\cS}^{|\cR|-1}$ with $\psi_Z(\til{F})=F_{|\cR|}$. 

Iterating this argument $|\cR|-1$ further times we may use the partition properties 
of~$\gW^{|\cR|-1}, \ldots, \gW^1$ in turn and ultimately obtain a function 
$\phi\colon [|\cR|]\lra [c]$ as well as a copy 
$\til{\Pi}^0=\bl \til{Z}^0, \til{\cS}^0, \til{\gZ}^0\br\le \Pi^{|\cR|}$
of picture zero such that
\begin{equation}\label{eq:phidef}
	\text{if }
	\til{F}\in \til{\cS}^0 
	\text{ and }
	\psi_Z(\til{F})=F_\rho, 
	\text{ then }
	\gamma(\til{F})=\phi(\rho)\,.
\end{equation} 

Next we apply~\eqref{eq:231} to the colouring $F_\rho\longmapsto \phi(\rho)$ of $\cR$
and get some $y\in [|\gY|]$ such that~$\cQ_y$ is monochromatic under this colouring.
Due to~\eqref{eq:phidef} this means that the good copy 
$(\til{X}_y, \til{\cQ}_y)\in \til{\gZ}^0$ of 
$(X, \cQ)$ corresponding to $(X^0_y, \cQ^0_y)\in \gZ^0$ has 
$\til{\cQ}_y$ monochromatic under~$\gamma$. 
\end{proof}

\subsection{The clean partite lemma} \label{subsec:PL}

Now we analyse what happens when we clean the preliminary partite lemma 
by means of the partite method. Resuming the discussion of Subsection~\ref{subsec:PPL},
we suppose again that $F$ is a Steiner $(r, t)$-system with $V(F)=[k]$.

\begin{lem}[Clean partite lemma] \label{lem:cpl}
Given an $F$-hypergraph $(X, \cQ)$ and an integer~$c$ there is an $F$-hypergraph $(Z, \cS)$
and a system of copies $\gZ\subseteq \binom{(Z, \cS)}{(X, \cQ)}$ such that 
we have:
	\begin{enumerate}[label=\rmlabel]
		\item\label{it:cpl1} $\gZ\lra (X, \cQ)^F_c$
		\item\label{it:cpl2} If $(X', \cQ')$ and $(X'', \cQ'')$ are distinct members of 
		$\gZ$ and $x\subseteq V(X')\cap V(X'')$ has size $t$, then there are $F'\in \cQ'$ 
		and $F''\in \cQ''$ with $x\subseteq V(F')\cap V(F'')$.
\end{enumerate}
\end{lem}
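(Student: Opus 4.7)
The plan is to combine the preliminary partite lemma (Lemma~\ref{lem:ppl}) with the partite construction of Subsection~\ref{subsec:PC}, run once.  First I would apply Lemma~\ref{lem:ppl} to $(X,\cQ)$ with $c$ colours, obtaining an $F$-hypergraph $(Y,\cR)$ and a system $\gY\subseteq\binom{(Y,\cR)}{(X,\cQ)}$ satisfying both~\ref{it:ppl1} and~\ref{it:ppl2}.  I would then feed $(Y,\cR)$ and $\gY$ into the partite construction of Subsection~\ref{subsec:PC}, initialised by picture zero $\Pi^0$, and at each step $\rho\in[|\cR|]$ produce the required $\gW^\rho$ by a fresh application of Lemma~\ref{lem:ppl} to the $F$-hypergraph $(Z^{\rho-1}_\rho,\cS^{\rho-1}_\rho)$ extracted from $\Pi^{\rho-1}$.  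The final picture $\Pi^{|\cR|}=(Z,\cS,\gZ)$ will then satisfy $\gZ\lra(X,\cQ)^F_c$ by Lemma~\ref{lem:pcrams}, giving clause~\ref{it:cpl1} at once; composing the $m$-partite projection $\psi_Z$ with the $k$-partite projection of $Y$ equips $Z$ with the $k$-partite structure required to view $(Z,\cS)$ as an $F$-hypergraph.

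The heart of the proof will be a simultaneous induction on $\rho$ showing that every intermediate picture $\Pi^\rho$ satisfies (a) $Z^\rho$ is a Steiner $(r,t)$-system, (b) every good copy in $\gZ^\rho$ and every $F$-copy in $\cS^\rho$ is strongly induced in $Z^\rho$, and (c) any two distinct good copies in $\gZ^\rho$ sharing a $t$-set are jointly witnessed by $F$-copies as demanded by~\ref{it:cpl2}.  The base case $\rho=0$ is vacuous by the vertex-disjointness of the good copies in picture zero.  In the inductive step, pairs of objects from the same canonical copy of $\Pi^{\rho-1}$ are controlled by the inductive hypothesis.  For pairs from distinct canonical copies $\Pi_w,\Pi_{w'}$, the shared vertices lie in $V(W^\rho)$: the strong inducedness of $F_\rho$ in $(Y,\cR)$ forces any edge of $Z^{\rho-1}$ meeting $V(Z^{\rho-1}_\rho)$ in $\ge t$ vertices to lie in $E(Z^{\rho-1}_\rho)$, and the Steinerness of $W^\rho$ together with the strong inducedness of each $\tilde Z_\rho\in\gW^\rho$ in $(W^\rho,\cP^\rho)$ then closes the Steinerness and strong-inducedness arguments.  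For property~(c) in this case, the shared $t$-set $x$ projects via $\psi_{Z^{\rho-1}}$ to a $t$-subset of $V(X_{y'})\cap V(F_\rho)$ for the member $(X_{y'},\cQ_{y'})\in\gY$ corresponding to the relevant good copy, whereupon property~\ref{it:ppl2} of the initial application of Lemma~\ref{lem:ppl} yields an $F$-copy in $\cQ_{y'}$ covering this projection, which pulls back through the composition of amalgamation maps to the required $F$-copy in the good copy (and symmetrically for the other side).

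The main obstacle will be propagating the strong inducedness of $\cS^\rho$: an $F$-copy $\tilde F=\phi_w(\tilde F_0)\in\cS^\rho$ whose underlying $\tilde F_0$ projects via $\psi$ to some $F_\sigma\in\cR$ with $\sigma\ne\rho$ sits only partially in $V(W^\rho)$, and one has to rule out that an edge $e$ coming from a different canonical copy $\Pi_{w'}$ meets $V(\tilde F)$ in $t$ or more vertices without belonging to $E(\tilde F)$.  The resolution proceeds at two levels: such an $e$ must meet $V(\tilde Z_\rho)$ in $\ge t$ vertices, so the strong inducedness of $\tilde Z_\rho$ in $W^\rho$ (automatic from $\gW^\rho\subseteq\binom{(W^\rho,\cP^\rho)}{(Z^{\rho-1}_\rho,\cS^{\rho-1}_\rho)}$) forces $e\in E(\tilde Z_\rho)$; pulling $e$ back through $\overline\phi_w$ then yields an edge $e_0\in E(Z^{\rho-1}_\rho)$ that meets $V(\tilde F_0)$ in $\ge t$ vertices, whereupon the inductive strong inducedness of $\tilde F_0$ in $\Pi^{\rho-1}$ forces $e_0\in E(\tilde F_0)$ and hence $e\in E(\tilde F)$, contradicting the choice of $e$.
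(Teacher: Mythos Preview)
Your plan is correct and follows essentially the same route as the paper: apply the preliminary partite lemma to obtain $(Y,\cR)$ and $\gY$ with property~\ref{it:ppl2}, put $(Y,\cR)$ senkrecht, run the partite construction using the preliminary partite lemma at each step, and maintain by induction Steinerness, strong inducedness, and the intersection property~\ref{it:cpl2}; the decisive use of property~\ref{it:ppl2} for the cross-canonical-copy case of~(c) via projection to $(X_y,\cQ_y)\in\gY$ and $F_\rho\in\cR$ is exactly what the paper does.

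One small remark on your third paragraph: before you can invoke the strong inducedness of $\tilde Z_\rho$ in $W^\rho$ to conclude $e\in E(\tilde Z_\rho)$, you need $e\in E(W^\rho)$, which is not automatic since $e$ lives in the canonical copy $\Pi_{w'}$; this follows by pulling $e$ back through $\phi_{w'}$ and using the observation you already recorded (that $F_\rho\Str Y$ forces any edge of $Z^{\rho-1}$ meeting $V(Z^{\rho-1}_\rho)$ in $\ge t$ vertices to lie in $E(Z^{\rho-1}_\rho)$). The paper, incidentally, handles the strong inducedness of $\cS^\rho$ more cheaply: once Steinerness of $Z^\rho$ is established, any edge $e$ meeting $V(\tilde F)$ in $\ge t$ vertices projects to an edge of $Y$ meeting $V(F_\sigma)$ in $\ge t$ vertices, hence to an edge of $F_\sigma$, and then the corresponding edge of $\tilde F$ shares $\ge t$ vertices with $e$, forcing $e\in E(\tilde F)$ by Steinerness alone.
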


\begin{proof}
Owing to the preliminary partite lemma, there exists an $F$-hypergraph $(Y, \cR)$ 
and a system of copies $\gY\subseteq \binom{(Y, \cR)}{(X, \cQ)}$ with
$\gY\lra (X, \cQ)^F_c$
such that 
\begin{equation} \label{eq:prelimprop}
	\forall \bl\til{X}, \til{\cQ}\br\in\gY \,\, 
	\forall \til{F}\in\cR \,\,
	\forall x\subseteq V\bl\til{X}\br\cap V\bl\til{F}\br\,\,
		\bigl[
				|x|=t \, \Longrightarrow \,
				\exists \til{F}'\in \til{\cQ} \,\colon x\subseteq V\bl\til{F}'\br
		\bigr]\,.
\end{equation}
As this situation does not change by adding isolated vertices to $Y$ we may suppose 
for notational simplicity that this $k$-partite Steiner system is balanced, i.e.,
that 
\[
	|V^1(Y)|=\ldots=|V^{k}(Y)|=\tfrac mk
\]
holds for some positive multiple $m$ of $k$. 
Moreover, we may relabel the vertices of $Y$ so as to obtain 
\begin{equation} \label{eq:viy}
	V^i(Y)=\bigl[\tfrac {(i-1)m}k +1, \tfrac {im}k \bigr]
	\quad \text{for all } i\in [k]
\end{equation}
and, hence, $V(Y)=[m]$. 

Now we put $(Y, \cR)$ senkrecht and run the partite construction. 
Reusing the notation of Subsection~\ref{subsec:PC} we enumerate
$\cR$ and $\gY$ as in~\eqref{eq:242} and~\eqref{eq:243}, and let 
$\Pi^0=(Z^0, \cS^0, \gZ^0)$ be picture zero as described in 
Definition~\ref{dfn:picz}.
The goal is to construct recursively a sequence
$\Pi^1=(Z^1, \cS^1, \gZ^1), \ldots, \Pi^{|\cR|}=(Z^{|\cR|}, \cS^{|\cR|}, \gZ^{|\cR|})$ 
of pictures with $\Pi^\rho=\Pi^{\rho-1}\conc_\rho \gW^\rho$
for each $\rho\in [|\cR|]$, where~$\gW^\rho$ should be some Ramsey system with
$\gW^\rho\lra (Z^{\rho-1}_\rho, \cS^{\rho-1}_\rho)^F_c$. 

We intend to maintain throughout the construction   
\begin{enumerate}[label=\glabel]
\item\label{it:alpha}
that each $Z^\rho$ is a Steiner $(r, t)$-system 
\item\label{it:beta}
and that the copies of $F$ belonging to $\cS^\rho$ 
are strongly induced in $Z^\rho$. 
\end{enumerate}

Notice that picture zero has these properties because
$X$ is a Steiner $(r, t)$-system and the copies $\til{F}\in\cQ$ are strongly 
induced in $X$. Now suppose that for some $\rho\in [\cR]$ we have already managed to
construct a picture $\Pi^{\rho-1}=(Z^{\rho-1}, \cS^{\rho-1}, \gZ^{\rho-1})$ 
satisfying~\ref{it:alpha} and~\ref{it:beta}. Then 
$\bl Z^{\rho-1}_\rho, \cS^{\rho-1}_\rho\br$ is an $F$-hypergraph 
and the preliminary partite lemma allows us to choose $\gW^\rho$ in such a way that
$\bl Z^\rho_\rho, \cS^\rho_\rho\br$ is an $F$-hypergraph and that
the copies $\bl \til{Z}^{\rho-1}_\rho, \til{\cS}^{\rho-1}_\rho\br\in\gW^\rho$ are strongly
induced in $\bl Z^\rho_\rho, \cS^\rho_\rho\br$. It remains to be checked that the picture
$\Pi^\rho=\Pi^{\rho-1}\conc_\rho \gW^\rho$ has the properties~\ref{it:alpha} and~\ref{it:beta}
as well. 

Starting with~\ref{it:alpha} we consider any two edges $e$ and $e'$ of $Z^\rho$ 
with $|e\cap e'|\ge t$. We are to prove that $e=e'$. 
If $e\cap e'\not\subseteq V(Z^{\rho}_\rho)$ there is a single canonical copy 
$\til{\Pi}^{\rho-1}$ of the previous picture containing both $e$ and $e'$,
meaning that~\eqref{eq:part-ind} and the induction hypothesis lead to the desired conclusion. 
So we may suppose $e\cap e'\subseteq V(Z^{\rho}_\rho)$ from now on, whence 
$\psi_{Z^\rho}[e\cap e']\subseteq V(F_\rho)$. 
Using $|e\cap e'|\ge t$ and $F_\rho\Str Y$
we deduce~$e, e'\subseteq V(Z^{\rho}_\rho)$ and, as $Z^{\rho}_\rho$ 
is a Steiner $(r, t)$-system, this entails indeed $e=e'$. 

In view of 
$\cR\subseteq\binom{Y}{F}_{\str}$
and Definition~\ref{dfn:yr-hg}\ref{it:yr4} picture $\Pi^\rho$ satisfies~\ref{it:beta} as well.
This completes the proof that the partite construction we were aiming at can indeed be 
carried out.

Since the members of $\gY$ are strongly induced in $(Y, \cR)$, a similar argument yields
\begin{equation}\label{eq:gZ}\til{X}\Str Z^{|\cR|}
\text{ for all }
\bl \til{X}, \til{\cQ}\br \in \gZ^{|\cR|}\,.
\end{equation}
The $F$-hypergraph $(Z, \cS)$ and the system $\gZ$ promised by the clean partite lemma 
will essentially be a $k$-partite reorganisation of the last picture $\Pi^{|\cR|}$
and the partition property~\ref{it:cpl1} will be an easy consequence of Lemma~\ref{lem:pcrams}.
To get~\ref{it:cpl2} as well we prove that all the pictures we have generated satisfy 
this intersection property. In other words we contend that for  every nonnegative 
$\rho\le |\cR|$ we have:

\smallskip

{\it \hskip.3cm $(*)_\rho$ \hskip.2cm If $(X', \cQ'), (X'', \cQ'')\in \gZ^\rho$ are distinct 
			and 
			$x\subseteq V(X')\cap V(X'')$ is a $t$-set, then there are 
			$F'\in \cQ'$ and $F''\in \cQ''$ with $x\subseteq V(F')\cap V(F'')$.
}

\smallskip

Let us prove this by induction on $\rho$. The base case $\rho=0$ is clear because by 
Definition~\ref{dfn:picz}\ref{it:p01} there are no distinct members 
$(X', \cQ'), (X'', \cQ'')\in \gZ^0$ with $V(X')\cap V(X'')\not =\vn$.

For the inductive step we suppose that $(*)_{\rho-1}$ holds for some $\rho\in[|\cR|]$
and that $(X', \cQ')$, $(X'', \cQ'')$ as well as $x$ are as above.
Let 
$\til{\Pi}^{\rho-1}$ and $\wh{\Pi}^{\rho-1}$ 
be the canonical copies of picture~$\Pi^{\rho-1}$ with 
$(X', \cQ')\in \gZ\bl\til{\Pi}^{\rho-1}\br$ and $(X'', \cQ'')\in \gZ\bl\wh{\Pi}^{\rho-1}\br$.
If 
$\til{\Pi}^{\rho-1}=\wh{\Pi}^{\rho-1}$
the desired con\-clusion can be drawn from the induction hypothesis.

Otherwise $\psi_{Z^\rho}$ projects $x$ to a $t$-subset of~$V(F_\rho)$
and $(X', \cQ')$ onto some member of~$\gY$, say $(X_y, \cQ_y)$.
Applying~\eqref{eq:prelimprop} to $(X_y, \cQ_y)$, $F_\rho$, and~$\psi_{Z^{\rho}}[x]$
we get some $\til{F}'\in \cQ_y$ with $\psi_{Z^{\rho}}[x]\subseteq V(\til{F}')$.
The member $F'$ of $\cQ'$ that $\psi_{Z^{\rho}}$ projects to $\til{F}'$ satisfies
$x\subseteq V(F')$ and for similar reasons there is some $F''\in \cQ''$ with
$x\subseteq V(F'')$. This completes the inductive step.

Finally we put everything together:
since $(Z^{|\cR|}, \cS^{|\cR|})$ is a $(Y, \cR)$-hypergraph,~\eqref{eq:viy} tells us that 
there is a $k$-partite Steiner $(r, t)$-system $Z$ with
\[
	V^i(Z)= V^{(i-1)m/k+1}\bl Z^{|\cR|}\br \mydcup \ldots \mydcup V^{im/k}\bl Z^{|\cR|}\br
\]
for all $i\in [k]$ and $E(Z)=E(Z^{|\cR|})$. So informally $Z$ is the same 
as $Z^{|\cR|}$ except for having a different partite structure. 
Setting $\cS=\cS^{|\cR|}$ and $\gZ=\gZ^{|\cR|}$,
the same argument shows that $(Z, \cS)$ is actually an $F$-hypergraph
and that we have $\gZ\subseteq \binom{(Z, \cS)}{(X, \cQ)}$ by~\eqref{eq:gZ}.

Now $(Z, \cS)$ and $\gZ$ have the desired properties, because~\ref{it:cpl1} follows from
Lemma~\ref{lem:pcrams} and~\ref{it:cpl2} holds in view of $(*)_{|\cR|}$. 
\end{proof}

\subsection{The proof of Theorem~\ref{thm:main-a}} \label{subsec:APC}

Now we are ready to prove that $\cS^{\,\str}_<(r, t)$ is a Ramsey class.
To this end let any $F_<, X_<\in \cS^{\,\str}_<(r, t)$ be given. 
We are to find some ${Z_<\in \cS^{\,\str}_<(r, t)}$ with 
$Z_< \overset{\hskip-.25em\str}{\lra} (X_<)^{F_<}_c$, where the black triangle 
above the partition arrow is supposed to remind us that we are aiming for 
a {\it strongly} induced copy to be monochromatic. 
Recall that by Theorem~\ref{thm:hrc} there is an ordered $r$-uniform hypergraph~$Y_<$ 
with 
\begin{equation}\label{eq:yrams}
	Y_<\lra (X_<)^{F_<}_c\,,
\end{equation}
but this is not enough because it is neither clear whether~$Y_<$ is a Steiner 
$(r, t)$-system nor, if it actually is, whether the monochromatic copy of $X_<$ it leads to 
could always be chosen to be strongly induced. To overcome these problems we define 
$\cR$ to be the subset of $\binom{Y}{F}$ corresponding to $\binom{Y_<}{F_<}$, 
put $(Y, \cR)$ senkrecht, and try to run the partite construction
using the clean partite lemma in every amalgamation step.

For this purpose we suppose that $V(F_<)=[k]$ and $V(Y_<)=[m]$ hold for $k=v_F$ and~$m=v_Y$,
and that the orderings of $F_<$ and $Y_<$ agree with the natural orderings of $[k]$ and~$[m]$,
respectively. Moreover we let $\cQ$ be the subset of 
$\binom{X}{F}_{\str}$ corresponding to $\binom{X_<}{F_<}_{\str}$. By~\eqref{eq:yrams} there
exists a system $\gY$ of semi-induced copies of $(X, \cQ)$ in $(Y, \cR)$ with 
\[
	\gY\longrightarrow (X, \cQ)^F_c\,.
\]
Enumerate $\cR$ and $\gY$ as in~\eqref{eq:242} and~\eqref{eq:243}, and let the picture zero 
corresponding to this situation be given as in Definition~\ref{dfn:picz}
by $\Pi^0=(Z^0, \cS^0, \gZ^0)$. We intend to run the partite construction, thus generating
a sequence 
$\Pi^0=(Z^0, \cS^0, \gZ^0), \ldots, \Pi^{|\cR|}=\bl Z^{|\cR|}, \cS^{|\cR|}, \gZ^{|\cR|}\br$
of pictures.

Suppose that for some $\rho\in [|\cR|]$ we have already managed to obtain, 
after $\rho-1$ steps, the picture $\Pi^{\rho-1}=(Z^{\rho-1}, \cS^{\rho-1}, \gZ^{\rho-1})$
such that the following conditions hold:
\begin{enumerate}[leftmargin=5em] 
	\item[$(a)_{\rho-1}\,\,$] The hypergraph $Z^{\rho-1}$ is a Steiner $(r, t)$-system. 
	\item[$(b)_{\rho-1}\,\,$] Every member of $\cS^{\rho-1}$ is strongly induced in $Z^{\rho-1}$.
	\item[$(c)_{\rho-1}\,\,$] If $(\til{X}, \til{\cQ})\in \gZ^{\rho-1}$, then $\til{X}$ 
					is strongly induced in $Z^{\rho-1}$.
\end{enumerate}

Observe that these are reasonable assumptions, since picture zero evidently 
satisfies $(a)_0$, $(b)_0$, and $(c)_0$.

Now, in particular, the $k$-partite $F$-system $(Z^{\rho-1}_\rho, \cS^{\rho-1}_\rho)$ 
is an $F$-hypergraph by $(a)_{\rho-1}$ and $(b)_{\rho-1}$. Owing to the clean partite
lemma there exists an $F$-hypergraph $(Z^{\rho}_\rho, \cS^{\rho}_\rho)$ together with a 
system $\gW^\rho$ of partite, strongly induced copies of $(Z^{\rho-1}_\rho, \cS^{\rho-1}_\rho)$
with ${\gW^\rho\lra (Z^{\rho-1}_\rho, \cS^{\rho-1}_\rho)}$ 
such that 
\begin{enumerate} 
	\item[$(*)_\rho$] If $\bl \til{Z}^{\rho-1}_\rho, \til{\cS}^{\rho-1}_\rho\br, 
			\bl \wh{Z}^{\rho-1}_\rho, \wh{\cS}^{\rho-1}_\rho\br\in \gW^\rho$
			are distinct and 
			$x\subseteq V\bl \til{Z}^{\rho-1}_\rho\br\cap V\bl \wh{Z}^{\rho-1}_\rho\br$
			is a~$t$-set, then there are $\til{F}\in \til{\cS}^{\rho-1}_\rho$ and
			$\wh{F}\in \wh{\cS}^{\rho-1}_\rho$ with 
			$x\subseteq V\bl \til{F} \br \cap V\bl \wh{F} \br$.
\end{enumerate} 

We define $\Pi^\rho=(Z^\rho, \cS^\rho, \gZ^\rho)=\Pi^{\rho-1}\conc_\rho \gW^\rho$ and contend
that this picture satisfies $(a)_\rho$,~$(b)_\rho$, and $(c)_\rho$. It should be clear that
these claims easily follow from the induction hypothesis and the following statement:
\begin{equation}\label{eq:picinduced}
	\text{If }
	\til{\Pi}^{\rho-1}
	\text{ is a canonical copy of }
	\Pi^{\rho-1}, 
	\text{ then } 
	Z\bl \til{\Pi}^{\rho-1}\br \Str Z(\Pi^\rho)\,. 
\end{equation}
Before proving this, we record two other properties of our construction.
First, the copies from $\gW^\rho$ being strongly induced in $(Z^{\rho}_\rho, \cS^{\rho}_\rho)$
entails:  
\begin{enumerate} 
	\item[$(\boxplus)_\rho$]	
	If $e\in E\bl Z^\rho_\rho\br$ and if $\til{\Pi}^{\rho-1}$
	is a canonical copy of $\Pi^{\rho-1}$ with
	$\big|e\cap V\bl \til{\Pi}^{\rho-1} \br\big|\ge t$,
	then~$e\in E\bl \til{\Pi}^{\rho-1}\br$. 
\end{enumerate}
Second we reformulate $(*)_\rho$ in a more picturesque way:

\begin{enumerate} 
	\item[$(\boxtimes)_\rho$] If $\til{\Pi}^{\rho-1}, \wh{\Pi}^{\rho-1}$ are distinct 
			canonical copies of $\Pi^{\rho-1}$ in $\Pi^\rho$ and
			$x\subseteq V\bl \til{\Pi}^{\rho-1}\br\cap V\bl \wh{\Pi}^{\rho-1}\br$
			is a~$t$-set, then there are 
			$\til{F}\in \cS\bl \til{\Pi}^{\rho-1}\br$ and
			$\wh{F}\in \cS\bl \wh{\Pi}^{\rho-1}\br$ with 
			$x\subseteq V\bl \til{F} \br \cap V\bl \wh{F} \br$
			and $\til{F}, \wh{F}\Str Z^\rho_\rho$.
\end{enumerate} 

Now we are ready to confirm~\eqref{eq:picinduced}. To this end, let a canonical copy
$\til{\Pi}^{\rho-1}$, an edge~$e\in E(\Pi^\rho)$ and a $t$-set 
$x\subseteq e\cap V\bl \til{\Pi}^{\rho-1}\br$ be given. We are to prove that 
$e\in E\bl \til{\Pi}^{\rho-1}\br$. Let $\wh{\Pi}^{\rho-1}$ be the canonical copy
of $\Pi^{\rho-1}$ with $e\in E\bl \wh{\Pi}^{\rho-1}\br$. Since we are otherwise done, 
we may suppose that $\til{\Pi}^{\rho-1}$ and $\wh{\Pi}^{\rho-1}$ are distinct.
Owing to $(\boxtimes)_\rho$ there exists some $\wh{F}\in \cS\bl \wh{\Pi}^{\rho-1}\br$
with $x\subseteq V\bl \wh{F} \br$ and $\wh{F}\Str Z^\rho_\rho$. 
By $(b)_{\rho-1}$ we also have $\wh{F} \Str Z\bl \wh{\Pi}^{\rho-1}\br$.
Together with $x\subseteq V\bl\wh{F}\br\cap e$ this implies
$e\in E\bl\wh{F}\br\subseteq E\bl Z^\rho_\rho\br$. Thus $(\boxplus)_\rho$ yields
~$e\in E\bl \til{\Pi}^{\rho-1}\br$ and consequently $Z\bl \til{\Pi}^{\rho-1}\br$
is indeed strongly induced $Z(\Pi^\rho)$. We have thereby completed the
proof of~\eqref{eq:picinduced} and, hence, the proof of $(a)_\rho$,~$(b)_\rho$, and $(c)_\rho$.

We have thereby shown that the envisaged partite construction can indeed be carried out.
Recall that Lemma~\ref{lem:pcrams} gives 
\begin{equation} \label{eq:final}
	\gZ^{|\cR|}\lra (X, \cQ)^F_c\,.
\end{equation}

Now let $Z_<$ be the ordered Steiner $(r, t)$-system obtained from $Z^{|\cR|}$ 
by ordering the vertices in any way satisfying 
\[
	V^1\bl Z^{|\cR|}\br<\ldots <V^M\bl Z^{|\cR|}\br
\]
and forgetting the partite structure. Then we have $Z_<\in \ccS^{\,\str}_<(r, t)$
by $(a)_{|\cR|}$. Moreover it is easy to deduce from $(b)_{|\cR|}$, $(c)_{|\cR|}$,
and~\eqref{eq:final} that $Z_< \overset{\hskip-.25em\str}{\lra} (X_<)^{F_<}_c$ holds 
in the sense of $\ccS^{\,\str}_<(r, t)$. 
This means that $Z_<$ has the desired Ramsey property and
the proof of Theorem~\ref{thm:main-a} is complete.

\section{The other classes} \label{sec:neg}

In this section we shall prove Theorem~\ref{thm:main-b}. To this end we need to show, 
on the one hand, that under certain conditions a class $\ccT(r, t)$ has the $F$-Ramsey 
property and, on the other hand, that there is a counterexample if these conditions fail. 
We call the results of the former type {\it positive} and those of the 
latter type {\it negative}.
  
\subsection{Positive results} 
The positive part of Theorem~\ref{thm:main-b} is actually a direct corollary of 
Theorem~\ref{thm:main-a}. The extra assumptions~\ref{it:main-uno} and \ref{it:main-weak} 
from Theorem~\ref{thm:main-b} are in the following way helpful for seeing this:

\begin{enumerate}
\item[$\bullet$] If $F\in\ccS(r, t)$ is homogeneous, then there is a unique ordered 
		version $F_<$ of $F$ and for every $G{_<\in\ccS_<(r, t)}$ there is natural 
		bijective correspondence between $\binom{G_<}{F_<}$ and~$\binom{G}{F}$. 
\item[$\bullet$] If $r=t$ or if $F$ is a complete Steiner $(r, t)$-system, 
		then every induced copy of $F$ is in fact strongly induced.
\end{enumerate}

One way to show that these observations and Theorem~\ref{thm:main-a} imply the positive part 
of Theorem~\ref{thm:main-b} is to look separately at the three cases 
$\ccT=\ccS$, $\ccT=\ccS_<$, and~$\ccT=\ccS^{\,\str}$. (Recall that the 
case $\ccT=\ccS^{\,\str}_<$ was already covered by Theorem~\ref{thm:main-a}).

Let us illustrate this by treating the class~$\ccS(r, t)$. 
Suppose to this end that $F, G\in \ccS(r, t)$ are given, where $F$ obeys 
conditions~\ref{it:main-uno} and \ref{it:main-weak}. That is, $F$ is 
homogeneous and, in case $t<r$, it is also a complete Steiner $(r, t)$-system. 
Take any ordered versions~$F_<$ and~$G_<$ of~$F$ and~$G$,
and let $H_<\in\ccS^{\,\str}_<(r, t)$ with $H_<\lra (G_<)^{F_<}_c$ be given by 
Theorem~\ref{thm:main-a}. It suffices to confirm that $H$ is as desired. So consider any
$c$-colouring of $\binom{H}{F}$. Of course, this induces a $c$-colouring of 
$\binom{H_<}{F_<}_{\str}$ and by our choice of $H$ there is a 
strongly induced copy~$\til{G}_<$ of~$G_<$ in~$H_<$ for which~$\binom{\til{G}_<}{F_<}_{\str}$
is monochromatic. Now $\til{G}$ is, in particular, an induced copy of~$G$ in~$H$ and 
by the observations above $\binom{G}{F}$ is monochromatic with respect to the colouring we 
started with. 

Similar but marginally easier considerations apply to the classes 
$\ccS_<(r, t)$ and $\ccS^{\,\str}(r, t)$ as well. We leave the details to the reader. 
     
\subsection{Negative results} The proofs of most of our negative results
will utilise the following lemma which closely follows the lines 
of~\cite{NeRo2}*{Theorem~2}.

\begin{lem}\label{lem:order}
For every $K\in\ccS(r,t)$ there is some $G\in\ccS(r,t)$ such that no matter how we
assign orderings to the vertex sets of $G$ and $K$, thus obtaining $G_<, K_<\in\ccS_<(r,t)$,
we will always have $\binom{G_<}{K_<}_{\str}\not =\varnothing$.
\end{lem}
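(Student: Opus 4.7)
I would follow the probabilistic template of~\cite{NeRo2}*{Theorem~2}. Set $k=v_K$, $m=e_K$, and let $N$ be a large integer to be chosen. Take $G^{*}$ to be the random $r$-uniform hypergraph on the vertex set $[N]$ in which each $r$-subset is present independently with probability $p=N^{-\alpha}$, where the exponent $\alpha$ lies in an open window whose lower endpoint is $r-t$ and whose upper endpoint is a function of $k$ and $m$. This window is non-empty provided $m(r-t)<k-1$, a condition that one may always arrange by first enlarging $K$ with sufficiently many isolated vertices (a Steiner $(r,t)$-system $G$ that works for the enlargement automatically works for~$K$, since any ordering of $V(K)$ can be extended to an ordering of the enlargement).

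The lower bound on $\alpha$ guarantees, via a first-moment calculation, that the expected number of edge pairs of $G^{*}$ sharing $\ge t$ vertices is $o(N^{k}p^{m})$, so by Markov's inequality with positive probability there are few such bad pairs. The upper bound on $\alpha$ guarantees that for each fixed pair of orderings $<_G$ of $[N]$ and $<_K$ of $V(K)$, the expected number of strongly induced copies of $K$ in $G^{*}$ whose vertices, listed according to $<_G$, realise the order type $<_K$ is at least $N^{1+\delta}$ for some $\delta>0$. Janson's inequality then yields that the probability of zero such copies is at most $\exp(-N^{1+\delta'})$ for some $\delta'>0$, and a union bound over the $N!\cdot k!\le\exp(N\log N)$ possible pairs of orderings still leaves positive probability that all pairs are realised simultaneously.

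Fix such a realisation of $G^{*}$ and delete one edge from each bad edge pair, obtaining a Steiner $(r,t)$-subsystem $G\subseteq G^{*}$. Each deleted edge lies in only $O(N^{k-r})$ strongly induced copies of $K$, so the deletions destroy a negligible fraction of copies, and $G$ still contains at least one strongly induced copy of $K$ realising every pair $(<_G,<_K)$. The main obstacle will be the quantitative balancing: the Janson bound must beat the $\exp(N\log N)$ union-bound cost, which forces the expected copy count to grow as a genuine power of $N$ rather than merely be large; this is precisely what pins down the admissible window for $\alpha$ and what necessitates the preliminary enlargement step for dense~$K$.
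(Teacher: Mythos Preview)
Your route is genuinely different from the paper's, which is considerably shorter. Instead of a binomial random $r$-graph, the paper starts from a \emph{deterministic} Steiner $(v_K,2)$-system $H$ on $N$ vertices with $e_H=\Omega(N^2)$ edges, and into each $v_K$-element edge of~$H$ inserts, independently and uniformly at random, one of the (say $m$) non-isomorphic ordered versions of~$K$. Because distinct edges of $H$ share at most one vertex, the resulting hypergraph~$G$ is automatically in $\ccS(r,t)$ and every inserted copy of $K$ is automatically strongly induced --- no deletion is needed. For any fixed ordering of $[N]$ and any target $K_<$, each edge of $H$ realises~$K_<$ with probability exactly $1/m$, independently; the failure probability $((m-1)/m)^{e_H}$ is then beaten by the union bound over $m\cdot N!$ pairs as soon as $N$ is large. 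No Janson, no balancing of exponents, no enlargement of~$K$.

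Your plan is workable in principle, but there is a gap at the Janson step: the event ``$C$ is a strongly induced copy'' is not monotone increasing (it asks that certain edges be present \emph{and} that others be absent), so Janson does not apply to it directly. The fix is to apply Janson to the weak copies (only the $m$ edges of~$K$ present), and then control the non-strongly-induced ones by a single Markov bound on their total unordered count, which for $\alpha>r-t$ is $o(N^{k-m\alpha})$; this total simultaneously bounds the bad count for every ordering pair. A related point: once you do have strongly induced copies in $G^{*}$, deletion is actually free. A deleted edge $e$ lies in a bad pair $(e,e')$ with $|e\cap e'|\ge t$; any strongly induced copy of~$K$ containing $e$ would then have to contain $e'$ as well, contradicting the Steinerness of $K$. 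Hence \emph{zero} strongly induced copies are destroyed --- the $O(N^{k-r})$ bound you quote is correct but far too crude, and read as a non-trivial estimate it would not close the accounting whenever $e_K\ge 2$.
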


\begin{proof} Due to the similarity with the argument from~\cite{NeRo2} 
we only give a sketch. If $K$ is homogeneous we may take $G=K$, so suppose from now 
on that the number $m$ of linear orderings on $V(K)$ that lead to different ordered Steiner 
systems is greater than $1$. If the integer $N$ is large enough depending on $v_K$, then an 
easy probabilistic argument shows that there is some $H\in \ccS(v_K,2)$ with $v_H=N$ and 
$e_H=\Omega(N^2)$. Let $H_<$ be any ordered version of~$H$ and let $G_<\in\ccS(r,t)$ be the 
random ordered Steiner system obtained by inserting independently and uniformly at random 
one of the $m$ ordered versions of $K$ into each edge of $H$. 
If $N$ and $H$ were chosen so large that
\[
	m\cdot N!\bigl(\tfrac{m-1}m\bigr)^{e_H}<1\,,
\]
then with positive probability $G$ will be as desired.
\end{proof}

\begin{rem}
Lemma~\ref{lem:order} says that the class $\cS^{\,\str}(r, t)$ has the so-called
{\it ordering property}. There is an alternative proof of this fact using 
Theorem~\ref{thm:main-a}. For a similar argument we refer to~\cite{NeRo75}.
\end{rem}

\subsubsection{{\bf Unordered classes and homogeneity}}
In this subsubsection we show that condition~\ref{it:main-uno} from Theorem~\ref{thm:main-b}
is indeed necessary (see Corollary~\ref{cor:33} below). 
The next result states slightly more than what we need.

\begin{prop}
For every $F\in\ccS(r,t)$ that is not homogeneous there is some $G\in\ccS(r,t)$ such that
for every $H\in\ccS(r,t)$ there is a red-blue colouring of $\binom{H}{F}$ such that
for no $\widetilde{G}\in\binom{H}{G}$ the set $\binom{\widetilde{G}}{F}_{\str}$ is 
monochromatic.
\end{prop}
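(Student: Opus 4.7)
The plan is to exploit the non-homogeneity of $F$ to distinguish at least two ordered versions of it, and then to invoke Lemma~\ref{lem:order} to force every induced copy of $G$ to contain strongly induced copies of $F$ of both ordered types. Since $F$ is not homogeneous, the set $T$ of equivalence classes of linear orderings on $V(F)$ modulo ordered isomorphism of $F$ has $|T|\ge 2$; fix once and for all a partition $T=T_{\mathrm{red}}\dcup T_{\mathrm{blue}}$ into two nonempty parts.

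Apply Lemma~\ref{lem:order} with $K=F$ to obtain $G\in\ccS(r,t)$ with the property that, for every linear ordering of $V(G)$ and every linear ordering of $V(F)$, the resulting set $\binom{G_<}{F_<}_{\str}$ is nonempty. Given any $H\in\ccS(r,t)$, fix an arbitrary linear ordering of $V(H)$ and colour each $\til{F}\in\binom{H}{F}$ red or blue according to whether the ordered version it inherits from this ordering belongs to $T_{\mathrm{red}}$ or to $T_{\mathrm{blue}}$.

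Now let $\til{G}\in\binom{H}{G}$ be arbitrary. The ambient ordering on $V(H)$ restricts to a linear ordering on $V(\til{G})$, turning it into some $\til{G}_<\in\ccS_<(r,t)$. Picking any $F_<^{\mathrm{r}}\in T_{\mathrm{red}}$ and any $F_<^{\mathrm{b}}\in T_{\mathrm{blue}}$, Lemma~\ref{lem:order} tells us that both $\binom{\til{G}_<}{F_<^{\mathrm{r}}}_{\str}$ and $\binom{\til{G}_<}{F_<^{\mathrm{b}}}_{\str}$ are nonempty, and forgetting orders one thereby obtains strongly induced copies of $F$ in $\til{G}$ of each of the two colours. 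Hence $\binom{\til{G}}{F}_{\str}$ fails to be monochromatic, as desired.

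The main point to be checked is that Lemma~\ref{lem:order} supplies exactly what we need: the ordering on $\til{G}_<$ is inherited from $H$ rather than freely chosen, but the lemma asserts its conclusion uniformly over all orderings of $V(G)$, so this distinction is immaterial. A minor bookkeeping item is that a strongly induced ordered copy of $F_<$ in $\til{G}_<$ forgets to a strongly induced copy of $F$ in $\til{G}$ of the prescribed ordered type, which is immediate since strong inducedness depends only on the underlying unordered subsystem.
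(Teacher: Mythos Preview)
Your proof is correct. It follows the same overall strategy as the paper --- order $H$ arbitrarily, colour copies of $F$ by their induced ordered isomorphism type, and use Lemma~\ref{lem:order} to force both colours inside every copy of $G$ --- but with a small simplification: you apply Lemma~\ref{lem:order} with $K=F$ itself and invoke its conclusion twice (once for a red ordered type, once for a blue one), whereas the paper takes $K$ to be the disjoint union of two copies of $F$, orders $K$ so that its two halves realise the two chosen ordered types, and then obtains both colours from a single strongly induced copy of $K_<$. Your variant sidesteps the need to check that each half of $K$ is strongly induced in $K$ (and hence in $\widetilde{G}$ via transitivity of $\Str$), at the modest cost of appealing to the lemma for two different orderings of $F$. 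Both routes rest on the same key lemma and the same colouring idea; yours is marginally more direct.
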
  

\begin{proof}
Let $K\in\ccS(r,t)$ be the disjoint union of two copies of $F$ and let $G\in\ccS(r,t)$
be obtained by applying the previous lemma to $K$. We contend that $G$ has the requested 
property. 

To confirm this we consider any two distinct ordered versions of $F$, say $F'_<$ and
$F''_<$. Let $H\in\ccS(r,t)$ be arbitrary and let $H_<$ be any ordering of $H$.
Now colour all members of $\binom{H}{F}$ that are isomorphic to $F'_<$ under this ordering 
{\it red}, those isomorphic to $F''_<$ {\it blue}, and the remaining ones arbitrarily either 
red or blue. 

Now look at any $\widetilde{G}\in\binom{H}{G}$. Notice that 
$\widetilde{G}$ inherits an ordering from $H_<$, thus becoming some 
$\widetilde{G}_<\in\ccS_<(r,t)$. Let $K_<$ be obtained from $K$ by ordering its vertices 
in such a way that its first $v_F$ vertices form a copy of $F'_<$ while its remaining
vertices from a copy of $F''_<$. By the choice of $G$ there is a strongly induced copy 
$\widetilde{K}_<$ of $K_<$ in $\widetilde{G}_<$. By the construction of~$K_<$ the set
$\binom{\widetilde{K}}{F}_{\str}$ contains copies of both colours and, hence, so 
does~$\binom{\widetilde{G}}{F}_{\str}$.
\end{proof}
   
\begin{cor}\label{cor:33}
If $F\in \ccS(r,t)$ is not homogeneous, then neither of the two unordered classes of Steiner 
$(r,t)$-systems has the $F$-Ramsey property. \hfill $\Box$
\end{cor}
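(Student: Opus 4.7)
The plan is to deduce Corollary~\ref{cor:33} directly from the preceding proposition, exploiting only the basic containment $\binom{\widetilde{G}}{F}_{\str}\subseteq\binom{\widetilde{G}}{F}$ and the fact that a strongly induced subobject is, in particular, an induced subobject.

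First I would spell out what the two unordered classes in question are, namely $\ccS(r,t)$ and $\ccS^{\,\str}(r,t)$, and fix a non-homogeneous $F\in\ccS(r,t)$. Applying the proposition, I would obtain a witness $G\in\ccS(r,t)$ such that for every $H\in\ccS(r,t)$ there is a red-blue colouring $\chi$ of $\binom{H}{F}$ with the property that $\binom{\widetilde{G}}{F}_{\str}$ is not $\chi$-monochromatic for any $\widetilde{G}\in\binom{H}{G}$. I would then argue that this single $G$ refutes the $F$-Ramsey property of both classes, with $c=2$ colours.

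For $\ccS(r,t)$, the class uses induced subobjects, so a candidate Ramsey $H$ for $G$ would need to guarantee a $\widetilde{G}\in\binom{H}{G}$ with $\binom{\widetilde{G}}{F}$ monochromatic under every $2$-colouring of $\binom{H}{F}$. Taking the colouring $\chi$ from the proposition and using $\binom{\widetilde{G}}{F}_{\str}\subseteq\binom{\widetilde{G}}{F}$, monochromaticity of the larger set would force monochromaticity of the smaller one, contradicting the conclusion of the proposition. Hence no such $H$ can exist.

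For $\ccS^{\,\str}(r,t)$, the underlying object class is still $\ccS(r,t)$, so the same $G$ lies in it; what changes is that subobjects are now strongly induced copies. A candidate Ramsey $H$ would need to provide, for every $2$-colouring of $\binom{H}{F}_{\str}$, some $\widetilde{G}\in\binom{H}{G}_{\str}$ with $\binom{\widetilde{G}}{F}_{\str}$ monochromatic. Restrict the colouring $\chi$ of $\binom{H}{F}$ from the proposition to $\binom{H}{F}_{\str}$. Since $\widetilde{G}\Str H$ implies $\widetilde{G}\le H$, any such $\widetilde{G}\in\binom{H}{G}_{\str}$ lies in $\binom{H}{G}$, and the proposition tells us that $\binom{\widetilde{G}}{F}_{\str}$ is not monochromatic under $\chi$, hence neither under its restriction. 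This again contradicts the assumed Ramsey property and finishes the proof.

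There is no genuine obstacle here: the corollary is essentially a bookkeeping consequence of the proposition once one notices the two elementary inclusions above. The only point worth flagging is that one must use the \emph{same} $G$ for both classes and that the colouring produced by the proposition, while defined on $\binom{H}{F}$, still restricts meaningfully to $\binom{H}{F}_{\str}$ to handle the strong case.
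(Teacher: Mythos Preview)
Your argument is correct and is precisely the intended derivation: the paper treats the corollary as immediate from the proposition (it carries only a $\Box$), and you have simply spelled out the two elementary inclusions $\binom{\widetilde{G}}{F}_{\str}\subseteq\binom{\widetilde{G}}{F}$ and $\binom{H}{G}_{\str}\subseteq\binom{H}{G}$ that make this work. The one point you handled implicitly but correctly is that for the strong class one also needs $\binom{\widetilde{G}}{F}_{\str}\subseteq\binom{H}{F}_{\str}$ whenever $\widetilde{G}\Str H$, which follows from the transitivity of $\Str$.
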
   

\subsubsection{{\bf Weak classes and completeness}} \label{subsec:weak} 
Finally we need to show that condition~\ref{it:main-weak} from Theorem~\ref{thm:main-b}
is necessary. It seems convenient to deal with the cases $\ccT=\ccS_<$ and $\ccT=\ccS$
separately. We begin with the easier, ordered case.

\begin{lem} \label{lem:34}
If $t<r$ and $F_<\in \ccS_<(r,t)$ is not complete, then $\ccS_<(r,t)$ does not have the
$F_<$-Ramsey property.
\end{lem}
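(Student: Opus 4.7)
Since $F_<$ is not complete, I would fix a $t$-set $x \subseteq V(F)$ that is contained in no edge of $F$. The plan is, given any $H_< \in \ccS_<(r,t)$, to define a finite-colour colouring of $\binom{H_<}{F_<}$ and to exhibit a fixed $G_< \in \ccS_<(r,t)$ such that no induced copy of $G_<$ in $H_<$ is monochromatic.

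For the colouring, given $\widetilde{F}_< \in \binom{H_<}{F_<}$ with image $\widetilde{x}$ of $x$, I first note that $H_<$ being Steiner forces at most one edge $e \in E(H)$ to cover $\widetilde{x}$. I would then colour $\widetilde{F}_<$ by a default symbol if no such $e$ exists, and otherwise by the ordered isomorphism type of the subhypergraph of $H_<$ induced on $V(\widetilde{F}) \cup e$. Only finitely many colours appear, since the $r-t$ extra vertices of $e$ can only fall into finitely many relative positions with respect to the $v_F$ vertices of $V(\widetilde{F})$.

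To construct $G_<$, I would take two vertex-disjoint copies $F^{(a)}_<, F^{(b)}_<$ of $F_<$ and attach two new edges: $e^{(a)}$ covering the image of $x$ in $F^{(a)}$ together with $r-t$ fresh vertices placed to the right of $V(F^{(a)})$ in the ordering, and $e^{(b)}$ covering the image of $x$ in $F^{(b)}$ together with $r-t$ fresh vertices placed to the left of $V(F^{(b)})$. Because $x$ is in no edge of $F$, the result is a valid Steiner $(r,t)$-system: the two extra edges each meet any other edge in a proper subset of the relevant copy of $x$, hence in fewer than $t$ vertices. In any induced copy $\widetilde{G}_< \le H_<$, the images of $e^{(a)}$ and $e^{(b)}$ are edges of $H$; by the Steinerness of $H$ they are the unique edges of $H$ covering the two $x$-images, so the $F^{(a)}$- and $F^{(b)}$-copies of $F_<$ inside $\widetilde{G}_<$ acquire the two ordered-type colours that they already have inside $G_<$ itself.

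The main obstacle will be verifying that those two ordered types really are distinct. In the generic range $v_F > t$, a direct inspection of which position indices of the configuration carry $F$-edges (an interval inside $[1,v_F]$ for one placement, versus the shifted interval $[r-t+1, r-t+v_F]$ for the other) shows that the two ordered hypergraphs are non-isomorphic. The subtle remaining case is $v_F = t$, which forces $E(F) = \varnothing$, so that $F$ together with its attached edge is merely an $r$-clique and the left/right placements of the extras coincide as ordered hypergraphs; in that degenerate regime one has to abandon the type colouring and instead use a numerical invariant of the $H_<$-ordering (in the spirit of the Erd\H{o}s--Rado canonical colouring), exploiting the sparsity forced by Steinerness of $H$ to preclude the required monochromatic configuration.
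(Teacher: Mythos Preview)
Your approach is essentially the paper's: extend $F_<$ through the uncovered $t$-set $x$ by a new edge in two different ways, let $G_<$ be the disjoint union of the two extensions, and colour a copy $\widetilde F_<$ according to how the unique $H$-edge through $\widetilde x$ sits relative to $V(\widetilde F)$. The paper uses only two colours (``$\widetilde F_<$ extends to a copy of $F'_<$'' versus ``not''), whereas you use the full ordered isomorphism type of the subhypergraph on $V(\widetilde F)\cup e$; but the skeleton is the same.

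The one genuine wrinkle you introduce is that your colour forgets which vertices of $V(\widetilde F)\cup e$ came from $\widetilde F$. That is precisely what creates your ``degenerate case'' $v_F=t$ (where both configurations collapse to a single ordered edge), and it is also what makes your $v_F>t$ argument rely on ``which positions carry $F$-edges'' --- an argument that silently assumes $E(F)\ne\varnothing$, which is not given. Both issues vanish if you refine the colouring to record, in addition, which positions of the $(v_F+r-t)$-vertex configuration are occupied by $V(\widetilde F)$ (equivalently: record the relative positions of the $r-t$ extra vertices of $e$ among the $v_F$ vertices of $\widetilde F$). With that refinement, ``all extras to the right'' and ``all extras to the left'' are \emph{always} distinct colours as soon as $r>t$, uniformly in $v_F\ge t$, and your construction of $G_<$ works without modification. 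This is exactly what the paper's two-colouring encodes implicitly, and it makes the Erd\H{o}s--Rado detour unnecessary.
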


\begin{proof}
Since $F_<$ is not complete, there is a $t$-set $x\subseteq V(F_<)$ that does not appear 
in any edge of $F_<$. We want to form new ordered Steiner systems by adding an edge 
to $F_<$ through~$x$ and $r-t$ new vertices. In view of $t<r$, this can be done in at least 
two nonisomorphic ways. Let $F'_<$ and $F''_<$ be two distinct ordered Steiner $(r, t)$-systems
that can arise in this way and let $G_<$ be their disjoint union. 

Now if any $H_<\in\ccS_<(r,t)$ is given, we may colour $\binom{H_<}{F_<}$ in such a way that 
exactly those copies of $F_<$ that sit in copies of $F'_<$ are red whilst all others are blue.
As the red copies of~$F_<$ cannot sit in copies of $F''_<$ as well, there is no 
monochromatic copy of $G_<$ under this colouring. 
\end{proof}

\begin{lem}
If $t<r$ and $F\in \ccS(r,t)$ is not complete, then $\ccS(r,t)$ does not have the
$F$-Ramsey property.
\end{lem}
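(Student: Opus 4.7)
The plan is to lift the ordered argument from Lemma~\ref{lem:34} to the unordered setting by means of the ordering property recorded in Lemma~\ref{lem:order}. First I would fix an arbitrary ordering $F_<$ of $F$, pick a $t$-set $x\subseteq V(F)$ that is in no edge of $F$, and then produce two ordered extensions $F'_<, F''_< \in \ccS_<(r,t)$ by adjoining one new edge through $x$ together with $r-t\ge 1$ new vertices placed in the ordering of $F_<$ in two prescribed (and different) ways. The key point in choosing $F'_<$ and $F''_<$ is that the relative positions of the $r-t$ new vertices with respect to $V(F_<)$ should differ; this is exactly the feature exploited in the proof of Lemma~\ref{lem:34}, forcing any ordered extension of a given copy of $F_<$ to be unambiguously of $F'_<$-type or of $F''_<$-type, but never both (since the Steiner property leaves at most one candidate edge through the image of $x$, and its induced ordering is then determined by $H_<$).

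Next I would set $K_< = F'_< \dcup F''_<$, let $K$ be its unordered version, and apply Lemma~\ref{lem:order} to obtain a Steiner system $G\in\ccS(r,t)$ with the property that, for every ordering $G_<$ of $G$ and every ordering $K_<$ of $K$, one has $\binom{G_<}{K_<}_{\str}\neq\varnothing$. I claim this $G$ witnesses the failure of the $F$-Ramsey property for $\ccS(r,t)$.

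To verify the claim, given any $H\in\ccS(r,t)$, I would pick an ordering $H_<$ and colour a copy $\widetilde{F}\in\binom{H}{F}$ red precisely when its inherited ordering is isomorphic to $F_<$ via some embedding $\sigma$ and the $\sigma$-image of $x$ is contained in an ordered edge of $H_<$ whose relative ordering matches $F'_<$; otherwise I would colour it blue. For an arbitrary $\widetilde{G}\in\binom{H}{G}$, the ordering property applied to the induced ordering $\widetilde{G}_<$ furnishes a strongly induced copy $\widetilde{K}_< = \widetilde{F'}_< \dcup \widetilde{F''}_<$ of $K_<$ in $\widetilde{G}_<$. Each of $\widetilde{F'}_<$ and $\widetilde{F''}_<$ restricts on the original $V(F_<)$-vertices to an induced copy of $F$ with inherited ordering $F_<$, and by the dichotomy above the first one is red while the second is blue, so $\binom{\widetilde{G}}{F}$ is bichromatic.

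The main obstacle I foresee is the delicate bookkeeping needed to verify that these two sub-$F$-copies are actually \emph{induced} in $H$ (and not merely in the nested intermediate objects $\widetilde{F'}$, $\widetilde{K}$, $\widetilde{G}$). The standard argument here chases an edge of $H$ contained in such a sub-copy through the chain $\text{induced}(H) \to \text{strongly induced}(\widetilde{G}) \to \text{induced}(\widetilde{K}) \to \text{induced}(\widetilde{F'})$, where the crucial step uses the \emph{strong} inducedness of $\widetilde{K}$ in $\widetilde{G}$ to absorb edges of $\widetilde{G}$ meeting $V(\widetilde{K})$ in at least $t$ vertices. Once this verification is in place, the uniqueness of the $H$-edge through the image of $x$ (guaranteed by Steinerness) ensures the colouring is well-defined in the sense that a red copy cannot also extend to an $F''_<$-structure, and the argument closes.
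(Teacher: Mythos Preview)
Your proposal is correct and follows essentially the same route as the paper: both apply Lemma~\ref{lem:order} to the disjoint union of two copies of $F'$ (your $F'_<$ and $F''_<$ have isomorphic underlying unordered systems, so your $K$ coincides with the paper's), order $H$ arbitrarily, reuse the colouring from the proof of Lemma~\ref{lem:34}, and locate a strongly induced copy of $K_<=F'_<\dcup F''_<$ inside any $\widetilde{G}\in\binom{H}{G}$ to exhibit both colours. The ``obstacle'' you flag is milder than you suggest---plain transitivity of $\le$ along $\widetilde{F'}\le\widetilde{K}\le\widetilde{G}\le H$ already gives inducedness of the two sub-$F$ copies in $H$, and the strong inducedness supplied by Lemma~\ref{lem:order} is not actually needed at that step.
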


\begin{proof}
Again let $x\subseteq V(F)$ be a $t$-set not contained in any edge of $G$ and let $F'$ be
obtained from $F$ by adding a new edge containing $x$ and $r-t$ new vertices. 
Define $K\in\ccS(r, t)$ to be the disjoint union of two copies of $F'$ and let $G\in\ccS(r, t)$
be obtained by applying Lemma~\ref{lem:order} to $K$. We contend that there is no
$H\in\ccS(r, t)$ with $H\lra (G)^F_2$.

To see this, let any $H\in\ccS(r, t)$ be given, order it arbitrarily to get some 
$H_<\in\ccS_<(r, t)$ and consider the red-blue colouring of $H_<$ from the proof of
Lemma~\ref{lem:34}. By our choice of~$G$ every member of $\binom{H}{G}$ contains a copy 
of $K$ that in turn contains two copies of $F$ with different colours. 
So, in particular, no $\til{G}\in\binom{H}{G}$ is monochromatic. 
\end{proof}

\begin{bibdiv}
\begin{biblist}

\bib{AH78}{article}{
   author={Abramson, Fred G.},
   author={Harrington, Leo A.},
   title={Models without indiscernibles},
   journal={J. Symbolic Logic},
   volume={43},
   date={1978},
   number={3},
   pages={572--600},
   issn={0022-4812},
   review={\MR{503795 (80a:03045)}},
   doi={10.2307/2273534},
}

\bib{BR16}{article}{
   author={Bhat, Vindya},
   author={R{\"o}dl, Vojt{\v{e}}ch},
   title={A short proof of the induced Ramsey Theorem for hypergraphs},
   journal={Discrete Math.},
   volume={339},
   date={2016},
   number={3},
   pages={1147--1149},
   issn={0012-365X},
   review={\MR{3433919}},
   doi={10.1016/j.disc.2015.09.019},
}

\bib{Bod15}{article}{
   author={Bodirsky, Manuel},
   title={Ramsey classes: examples and constructions},
   conference={
      title={Surveys in combinatorics 2015},
   },
   book={
      series={London Math. Soc. Lecture Note Ser.},
      volume={424},
      publisher={Cambridge Univ. Press, Cambridge},
   },
   date={2015},
   pages={1--48},
   review={\MR{3497266}},
}	
	
\bib{Deuber75}{article}{
   author={Deuber, W.},
   title={Generalizations of Ramsey's theorem},
   conference={
      title={Infinite and finite sets (Colloq., Keszthely, 1973; dedicated
      to P. Erd\H os on his 60th birthday), Vol. I},
   },
   book={
      publisher={North-Holland, Amsterdam},
   },
   date={1975},
   pages={323--332. Colloq. Math. Soc. J\'anos Bolyai, Vol. 10},
   review={\MR{0369127 (51 \#5363)}},
}

\bib{EHP75}{article}{
   author={Erd{\H{o}}s, P.},
   author={Hajnal, A.},
   author={P{\'o}sa, L.},
   title={Strong embeddings of graphs into colored graphs},
   conference={
      title={Infinite and finite sets (Colloq., Keszthely, 1973; dedicated
      to P. Erd\H os on his 60th birthday), Vol. I},
   },
   book={
      publisher={North-Holland, Amsterdam},
   },
   date={1975},
   pages={585--595. Colloq. Math. Soc. J\'anos Bolyai, Vol. 10},
   review={\MR{0382049 (52 \#2937)}},
}

\bib{GLR72}{article}{
   author={Graham, R. L.},
   author={Leeb, K.},
   author={Rothschild, B. L.},
   title={Ramsey's theorem for a class of categories},
   journal={Advances in Math.},
   volume={8},
   date={1972},
   pages={417--433},
   issn={0001-8708},
   review={\MR{0306010 (46 \#5137b)}},
}

\bib{HJ63}{article}{
   author={Hales, A. W.},
   author={Jewett, R. I.},
   title={Regularity and positional games},
   journal={Trans. Amer. Math. Soc.},
   volume={106},
   date={1963},
   pages={222--229},
   issn={0002-9947},
   review={\MR{0143712 (26 \#1265)}},
}

\bib{HN1}{article}{
	author={Hubi\v{c}ka, Jan},
	author={Ne{\v{s}}et{\v{r}}il, Jaroslav}, 
	title={Bowtie-free graphs have a Ramsey lift}, 
	journal={Advances in Applied Mathematics, To Appear},
	%doi={10.1007/s11083-017-9433-4},	
	%note={To Appear},
	eprint={1402.2700},
}

\bib{HN2}{article}{
	author={Hubi\v{c}ka, Jan},
	author={Ne{\v{s}}et{\v{r}}il, Jaroslav}, 
	title={All those Ramsey classes (Ramsey classes with closures and forbidden homomorphisms)}, 
	eprint={1606.07979},
	note={Submitted},
}

\bib{KPT}{article}{
   author={Kechris, A. S.},
   author={Pestov, V. G.},
   author={Todorcevic, S.},
   title={Fra\"\i ss\'e limits, Ramsey theory, and topological dynamics of
   automorphism groups},
   journal={Geom. Funct. Anal.},
   volume={15},
   date={2005},
   number={1},
   pages={106--189},
   issn={1016-443X},
   review={\MR{2140630 (2007j:37013)}},
   doi={10.1007/s00039-005-0503-1},
}

\bib{Keevash}{article}{
	author={Keevash, Peter}, 
	title={The existence of designs}, 
	eprint={1401.3665},
	note={Submitted},
}

\bib{LR06}{article}{
   author={Leader, Imre},
   author={Russell, Paul A.},
   title={Sparse partition regularity},
   journal={Proc. London Math. Soc. (3)},
   volume={93},
   date={2006},
   number={3},
   pages={545--569},
   issn={0024-6115},
   review={\MR{2266959}},
   doi={10.1017/S0024611506015887},
}

\bib{NeRo75}{article}{
   author={Ne{\v{s}}et{\v{r}}il, Jaroslav},
   author={R{\"o}dl, Vojt{\v{e}}ch},
   title={Partitions of subgraphs},
   conference={
      title={Recent advances in graph theory},
      address={Proc. Second Czechoslovak Sympos., Prague},
      date={1974},
   },
   book={
      publisher={Academia, Prague},
   },
   date={1975},
   pages={413--423},
   review={\MR{0429655}},
}

\bib{NeRo1}{article}{
   author={Ne{\v{s}}et{\v{r}}il, Jaroslav},
   author={R{\"o}dl, Vojt{\v{e}}ch},
   title={Partitions of finite relational and set systems},
   journal={J. Combinatorial Theory Ser. A},
   volume={22},
   date={1977},
   number={3},
   pages={289--312},
   review={\MR{0437351 (55 \#10283)}},
}

\bib{NeRo2}{article}{
   author={Ne{\v{s}}et{\v{r}}il, Jaroslav},
   author={R{\"o}dl, Vojt{\v{e}}ch},
   title={On a probabilistic graph-theoretical method},
   journal={Proc. Amer. Math. Soc.},
   volume={72},
   date={1978},
   number={2},
   pages={417--421},
   issn={0002-9939},
   review={\MR{507350 (80a:05158)}},
   doi={10.2307/2042818},
}

\bib{NeRo3a}{article}{
   author={Ne{\v{s}}et{\v{r}}il, Jaroslav},
   author={R{\"o}dl, Vojt{\v{e}}ch},
   title={Simple proof of the existence of restricted Ramsey graphs by means
   of a partite construction},
   journal={Combinatorica},
   volume={1},
   date={1981},
   number={2},
   pages={199--202},
   issn={0209-9683},
   review={\MR{625551 (83a:05101)}},
   doi={10.1007/BF02579274},
}

\bib{NeRo5}{article}{
   author={Ne{\v{s}}et{\v{r}}il, Jaroslav},
   author={R{\"o}dl, Vojt{\v{e}}ch},
   title={Two proofs of the Ramsey property of the class of finite
   hypergraphs},
   journal={European J. Combin.},
   volume={3},
   date={1982},
   number={4},
   pages={347--352},
   issn={0195-6698},
   review={\MR{687733 (85b:05134)}},
   doi={10.1016/S0195-6698(82)80019-X},
}

\bib{NeRo6}{article}{
   author={Ne{\v{s}}et{\v{r}}il, Jaroslav},
   author={R{\"o}dl, Vojt{\v{e}}ch},
   title={Combinatorial partitions of finite posets and lattices---Ramsey
   lattices},
   journal={Algebra Universalis},
   volume={19},
   date={1984},
   number={1},
   pages={106--119},
   issn={0002-5240},
   review={\MR{748915 (86c:05025)}},
   doi={10.1007/BF01191498},
}

\bib{NeRo4}{article}{
   author={Ne{\v{s}}et{\v{r}}il, Jaroslav},
   author={R{\"o}dl, Vojt{\v{e}}ch},
   title={Strong Ramsey theorems for Steiner systems},
   journal={Trans. Amer. Math. Soc.},
   volume={303},
   date={1987},
   number={1},
   pages={183--192},
   issn={0002-9947},
   review={\MR{896015 (89b:05127)}},
   doi={10.2307/2000786},
}

\bib{Nero7}{article}{
   author={Ne{\v{s}}et{\v{r}}il, Jaroslav},
   author={R{\"o}dl, Vojt{\v{e}}ch},
   title={The partite construction and Ramsey set systems},
   note={Graph theory and combinatorics (Cambridge, 1988)},
   journal={Discrete Math.},
   volume={75},
   date={1989},
   number={1-3},
   pages={327--334},
   issn={0012-365X},
   review={\MR{1001405}},
   doi={10.1016/0012-365X(89)90097-6},
}

\bib{NeRo8}{article}{
	author={Ne{\v{s}}et{\v{r}}il, Jaroslav},
    author={R{\"o}dl, Vojt{\v{e}}ch},
	title={Ramsey partial orders from acyclic graphs}, 
	journal={Order},
	doi={10.1007/s11083-017-9433-4},
	note={To Appear},
}

\bib{The10}{article}{
   author={Nguyen Van Th{\'e}, L.},
   title={Structural Ramsey theory of metric spaces and topological dynamics
   of isometry groups},
   journal={Mem. Amer. Math. Soc.},
   volume={206},
   date={2010},
   number={968},
   pages={x+140},
   issn={0065-9266},
   isbn={978-0-8218-4711-4},
   review={\MR{2667917}},
   doi={10.1090/S0065-9266-10-00586-7},
}

\bib{The13}{article}{
   author={Nguyen Van Th{\'e}, L.},
   title={More on the Kechris-Pestov-Todorcevic correspondence: precompact
   expansions},
   journal={Fund. Math.},
   volume={222},
   date={2013},
   number={1},
   pages={19--47},
   issn={0016-2736},
   review={\MR{3080786}},
   doi={10.4064/fm222-1-2},
}

\bib{PTW85}{article}{
   author={Paoli, M.},
   author={Trotter, W. T., Jr.},
   author={Walker, J. W.},
   title={Graphs and orders in Ramsey theory and in dimension theory},
   conference={
      title={Graphs and order},
      address={Banff, Alta.},
      date={1984},
   },
   book={
      series={NATO Adv. Sci. Inst. Ser. C Math. Phys. Sci.},
      volume={147},
      publisher={Reidel, Dordrecht},
   },
   date={1985},
   pages={351--394},
   review={\MR{818500}},
}

\bib{Ramsey30}{article}{
   author={Ramsey, Frank Plumpton},
   title={On a Problem of Formal Logic},
   journal={Proceedings London Mathematical Society},
   volume={30},
   date={1930},
   number={1},
   pages={264--286},
         doi={10.1112/plms/s2-30.1.264},
}

\bib{Rodl73}{unpublished}{
	author={R{\"o}dl, Vojt{\v{e}}ch}, 
	title={The dimension of a graph and generalized Ramsey numbers}, 
	note={Master's Thesis, Charles University, Praha, Czechoslovakia},
	date={1973},
}

\bib{Rodl76}{article}{
    author = {{R\"odl}, Vojt\v{e}ch},
    title = {A generalization of the Ramsey theorem},
    conference={
    		title={Graphs, Hypergraphs, Block Syst.},
			address={Proc. Symp. comb. Anal., Zielona Gora},
			date={1976},
	},
    date={1976},
    pages={211--219},
    review={Zbl. 0337.05133},
}

\bib{Sh329}{article}{
   author={Shelah, Saharon},
   title={Primitive recursive bounds for van der Waerden numbers},
   journal={J. Amer. Math. Soc.},
   volume={1},
   date={1988},
   number={3},
   pages={683--697},
   issn={0894-0347},
   review={\MR{929498}},
   doi={10.2307/1990952},
}
	
\bib{Solecki}{article}{
   author={Solecki, S\l awomir},
   title={Recent developments in finite Ramsey theory: foundational 
			aspects and connections with dynamics},
   conference={
      title={Proceedings of the International Congress of Mathematicians},
      address={Seoul},
      date={2014},
   },
   book={
      publisher={Kyung Moon Sa Co. Ltd.},
   },
   date={2014},
   volume={2},
   pages={103--115},
}

%\bib{Solecki}{unpublished}{
%	author={Solecki, S\l awomir}, 
%	title={Recent developments in finite Ramsey theory: foundational 
%			aspects and connections with dynamics},
%	note={Submitted},
%}
			
\end{biblist}
\end{bibdiv}
\end{document}